\newcommand{\C}{{\mathbb C}}
\newcommand{\R}{{\mathbb R}}
\newcommand{\Z}{{\mathbb Z}}
\newcommand{\N}{{\mathbb N}}
\newcommand{\T}{{\mathbb T}}
\newcommand{\eps}{{\epsilon}}
\newcommand{\vp}{{\varphi}}
\newcommand{\hh}{
2\mathbf{i} \partial_{\bar z} h}
\title[ ]{Krylov--Bogolyubov averaging}
\author{Wenwen Jian}
\address{School of Mathematical Sciences, Fudan University, Shanghai 200433, P. R. China} \email{wwjian16@fudan.edu.cn}
\author{Sergei Kuksin}
\address{Universit\'e Paris-Diderot (Paris 7), UFR de Math\'ematiques - Batiment Sophie Germain, 5 rue Thomas Mann, 75205 Paris,
 France  \& School of Mathematics, Shandong University, Jinan, PRC \& Saint Petersburg State University, Universitetskaya nab., St. Petersburg, Russia}
\email{Sergei.Kuksin@imj-prg.fr}
\author{ Yuan Wu}
\address{School of Mathematical Sciences, Fudan University, Shanghai 200433, P. R. China} \email{14110840003@fudan.edu.cn}
\keywords{}
\theoremstyle{plain}
\newtheorem{theorem}{Theorem}[section]
\newtheorem{corollary}[theorem]{Corollary}
\newtheorem{lemma}[theorem]{Lemma}
\newtheorem{proposition}[theorem]{Proposition}
\theoremstyle{definition}
\newtheorem{definition}[theorem]{Definition}
\newtheorem{example}[theorem]{Example}
\newcommand{\bigzero}{\mbox{\normalfont\Large\bfseries 0}}
\newcommand{\rvline}{\hspace*{-\arraycolsep}\vline\hspace*{-\arraycolsep}}
\begin{document}


\begin{abstract}
We present the modified approach to the classical Krylov-Bogolyubov averaging,\ developed recently for the purpose of PDEs. It allows to treat Lipschitz perturbations of linear systems  with pure imaginary spectrum and may be generalized to treat PDEs with small nonlinearities.
\end{abstract}
\maketitle

\numberwithin{equation}{section}
\section{Introduction}
The classical Krylov-Bogolyubov averaging method is a method for approximated analysis of nonlinear oscillating process.\
 Among a number of its equivalent or closely related formulations, we choose the following.\ In the space $ \mathbb{R}^{N}$,\ let us consider the differential equation of the form
\begin{eqnarray}\label{maineq}
\frac{\partial v}{\partial t}+ Av = \epsilon P(v),\ \ \ \ v(0) = v_{0},\ \ \ \ 0< \epsilon \le1,
\end{eqnarray}
where $ A$ is a linear operator with pure imaginary eigenvalues without Jordan cells,\ and $ P(v)$ is a  nonlinearity. The task is to study the behavior of solutions for (\ref{maineq}) on time-interval of order $ \epsilon^{-1}$. Let us firstly pass to the slow time $ \tau = \epsilon t$ and rewrite the equation as
\begin{eqnarray}\label{maineq1}
\frac{\partial v}{\partial \tau}+ \epsilon^{-1}Av = P(v),\ \ \ \  v(0) = v_{0},
\end{eqnarray}
where now $ \tau$ is a time-variable of order one.\ Secondly,\ in equation (\ref{maineq1}),\ let us pass to the interaction representation variables \footnote{under this name the change of variable (\ref{maineq2}) is known in  physics.}
 \begin{eqnarray}\label{maineq2}
a(\tau)= e^{\epsilon^{-1}\tau A}v(\tau),
\end{eqnarray}
and rewrite the equation as
 \begin{eqnarray}\label{maineq3}
\frac{\partial a}{\partial \tau} = e^{\epsilon^{-1}\tau A} P(e^{-\epsilon^{-1}\tau A}a), \ \ \ \ a(0) = v_{0}.
\end{eqnarray}
The Krylov-Bogolyubov averaging theorem is the following result:
\begin{theorem}\label{tbk}
Assume that the vector-field $P$  is locally Lipschitz continuous. Then

\noindent
1) the limit
\begin{eqnarray}\label{tbk1}
\langle\langle P\rangle\rangle(a) = \lim_{T\rightarrow  
\pm \infty}\frac{1}{|T|}\int_{0}^{T}e^{ s A} P(e^{- sA}a)\mathrm{d} s
\end{eqnarray}
exists for all $ a \in \mathbb{R}^{N}$ and also is Lipschitz continuous in $a$. 
\\
2) There exists $ \theta =\theta(|v_0|)
> 0$, such that for $ |\tau|\leq \theta $,\ a solution $ a^{\epsilon}(\tau)$ of equation (\ref{maineq3}) is  $ o(1)$-close,\ as $ \epsilon\rightarrow 0$,\ to a solution of the equation
$$
\frac{\partial a}{\partial \tau} = \langle\langle P\rangle\rangle(a), \ \ \ \ a(0) = v_{0}.
$$
\end{theorem}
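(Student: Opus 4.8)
The plan is to handle the two parts with different tools. For part~1 I would first use the spectral assumption: $A$ is real, semisimple, with spectrum on $i\R$, so in a suitable real basis it is block-diagonal with $2\times2$ rotation-generators of frequencies $\lambda_1,\dots,\lambda_m$, and $e^{sA}$ is the corresponding bounded group of block rotations $\mathcal R(s\lambda)$, $\lambda=(\lambda_1,\dots,\lambda_m)$, with $M:=\sup_s\|e^{sA}\|<\infty$. Then the integrand of (\ref{tbk1}) is $G(s\lambda,a)$ for $G(\phi,a):=\mathcal R(\phi)P(\mathcal R(-\phi)a)$, continuous in $\phi\in\T^m$ and, since $P$ is locally Lipschitz and $\mathcal R$ is isometric, Lipschitz in $a$ uniformly in $\phi$ on each ball. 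As $s\mapsto s\lambda$ is a linear flow on $\T^m$, existence of the limit (\ref{tbk1}) is Weyl equidistribution: approximating $G(\cdot,a)$ uniformly by trigonometric polynomials $\sum_k c_k(a)e^{i\langle k,\phi\rangle}$ and using $\frac1T\int_0^T e^{i\langle k,\lambda\rangle s}ds\to \mathbf 1_{\{\langle k,\lambda\rangle=0\}}$ for $T\to\pm\infty$, the averages converge to the resonant mean $\langle\langle P\rangle\rangle(a)=\sum_{\langle k,\lambda\rangle=0}c_k(a)$ in both time directions. The uniform Lipschitz bound on $G(\phi,\cdot)$ is inherited by the average, giving local Lipschitzness of $\langle\langle P\rangle\rangle$. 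I would also record the quantitative refinement used below: for the trigonometric truncation each non-resonant mode obeys $\frac1L\big|\int_{T_1}^{T_1+L}e^{i\langle k,\lambda\rangle s}ds\big|\le \frac{2}{L|\langle k,\lambda\rangle|}$, so the sliding-window averages of $G(\cdot,a)$ over windows of length $L$ tend to $\langle\langle P\rangle\rangle(a)$ uniformly in the base point $T_1\in\R$ and in $a$ on balls; call this modulus $\kappa(L)\to0$.

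For part~2, set $g^\epsilon(\tau,a):=e^{\epsilon^{-1}\tau A}P(e^{-\epsilon^{-1}\tau A}a)=G(\epsilon^{-1}\tau\lambda,a)$. I would first choose $\theta=\theta(|v_0|)$ and a ball $B$ around $v_0$ of radius $\sim M|v_0|$ so that, from $|g^\epsilon(\tau,a)|\le M\sup_B|P|$ and Gronwall, both the solution $a^\epsilon$ of (\ref{maineq3}) and the solution $\bar a$ of the averaged equation exist on $|\tau|\le\theta$ and remain in $B$, uniformly in $\epsilon$. Subtracting the integral forms and inserting $\pm g^\epsilon(s,\bar a(s))$,
\begin{equation*}
a^\epsilon(\tau)-\bar a(\tau)=\int_0^\tau\big[g^\epsilon(s,a^\epsilon)-g^\epsilon(s,\bar a)\big]ds+\int_0^\tau\big[g^\epsilon(s,\bar a(s))-\langle\langle P\rangle\rangle(\bar a(s))\big]ds=:I_1(\tau)+I_2(\tau).
\end{equation*}
Since $g^\epsilon(s,\cdot)$ is Lipschitz with constant $M^2L$ on $B$, we have $|I_1(\tau)|\le M^2L\int_0^\tau|a^\epsilon-\bar a|\,ds$. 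Thus, once $\beta(\epsilon):=\sup_{|\tau|\le\theta}|I_2(\tau)|\to0$ is shown, Gronwall gives $\sup_{|\tau|\le\theta}|a^\epsilon-\bar a|\le\beta(\epsilon)e^{M^2L\theta}\to0$, which is the assertion.

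The crux, and the step I expect to be hardest, is the averaging estimate $\beta(\epsilon)\to0$. I would partition $[0,\tau]$ into intervals $[t_k,t_{k+1}]$ of length $\delta$ with $\epsilon\ll\delta\ll1$ (e.g. $\delta=\sqrt\epsilon$) and freeze the slow argument $\bar a(s)\approx\bar a(t_k)$ on each. The freezing replaces $g^\epsilon(s,\bar a(s))-\langle\langle P\rangle\rangle(\bar a(s))$ by $g^\epsilon(s,\bar a(t_k))-\langle\langle P\rangle\rangle(\bar a(t_k))$ at a cost $O(\delta^2)$ per interval, since $\bar a$, $g^\epsilon(s,\cdot)$ and $\langle\langle P\rangle\rangle$ are Lipschitz; these errors sum to $O(\theta\delta)\to0$. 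For the frozen main term, the substitution $s=\epsilon r$ turns $\int_{t_k}^{t_{k+1}}$ into a sliding-window average,
\begin{equation*}
\int_{t_k}^{t_{k+1}}\big[g^\epsilon(s,\bar a(t_k))-\langle\langle P\rangle\rangle(\bar a(t_k))\big]ds=\delta\Big(\tfrac{\epsilon}{\delta}\int_{t_k/\epsilon}^{t_{k+1}/\epsilon}G(r\lambda,\bar a(t_k))\,dr-\langle\langle P\rangle\rangle(\bar a(t_k))\Big),
\end{equation*}
over a window of length $\delta/\epsilon\to\infty$, hence bounded by $\delta\,\kappa(\delta/\epsilon)$ via the uniform modulus of part~1; summing over the $\tau/\delta$ intervals gives $\le\theta\,\kappa(\delta/\epsilon)\to0$. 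Altogether $\beta(\epsilon)\le\theta\kappa(\delta/\epsilon)+O(\theta\delta)\to0$. Two features drive the argument and explain the ``modified'' viewpoint: $P$ is only Lipschitz, so the classical near-identity Bogolyubov transformation, which differentiates the vector field, is unavailable and the freezing must be carried out directly on the integral; and the convergence of the ergodic averages must be uniform in the base point $T_1$, which is precisely what the quasi-periodic structure established in part~1 provides.
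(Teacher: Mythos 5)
Your proposal is correct, and its Part~1 is essentially the paper's own argument in different clothing: the paper approximates $P$ on a ball by polynomials in $(v,\bar v)$ via Stone--Weierstrass and computes $\frac{1}{|T|}\int_0^T e^{\mathbf{i}t(\lambda_j-\Lambda\cdot\alpha+\Lambda\cdot\beta)}\,\mathrm{d}t$ explicitly (resonant terms survive, non-resonant ones decay like $1/|T|$), which is exactly your trigonometric approximation of $G(\phi,a)$ on the torus plus the Weyl-type oscillatory estimate --- the monomials $v^\alpha\bar v^\beta$ conjugated by the rotations are precisely torus harmonics. Part~2, however, takes a genuinely different route. The paper never runs Gronwall against the effective solution: it shows by Arzel\`a--Ascoli that the family $\{a^\epsilon\}$ is precompact in $C([-\theta,\theta],\C^n)$, proves (Lemmas \ref{h} and \ref{solutiona0}) that any limit point solves the effective equation --- the freezing on the intermediate scale $L=\sqrt\epsilon$ being performed along the oscillating solutions $a^{\epsilon}$ rather than along the effective solution --- and then invokes uniqueness for the effective equation to upgrade subsequential convergence to convergence of the whole family. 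Your direct comparison $a^\epsilon-\bar a=I_1+I_2$ with Gronwall is cleaner in finite dimensions: it avoids the compactness step, freezes along the $\epsilon$-independent curve $\bar a$, and in principle yields a quantitative bound $\beta(\epsilon)e^{C\theta}$. What the paper's compactness route buys is what its introduction advertises: the scheme survives in PDE settings, where Lipschitz estimates strong enough for Gronwall in the relevant norms are typically unavailable and compactness is the natural tool. A further structural difference: your scheme needs the window averages to converge uniformly in the base point $T_1$, which you extract from the explicit bound $\bigl|\int_{T_1}^{T_1+L}e^{\mathbf{i}\langle k,\lambda\rangle s}\,\mathrm{d}s\bigr|\le 2/\lvert\langle k,\lambda\rangle\rvert$; the paper instead sidesteps base-point uniformity via the equivariance $\langle\langle P\rangle\rangle(\Phi_{\Lambda\theta}a)=\Phi_{\Lambda\theta}\langle\langle P\rangle\rangle(a)$ (Proposition \ref{p}, item 3), rotating each window $[b_{j-1},b_j]$ back to base point $0$ at the cost of needing the rate of convergence to be uniform in the initial point $z\in\bar B_{2R}$ --- which is exactly the ``rate depends only on $R,\Lambda,P$'' clause of Lemma \ref{a4}. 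Both mechanisms are sound; yours makes the uniformity explicit, the paper's derives it from symmetry.
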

We stress that the only restriction imposed on the spectrum of the operator $ A$ is that it is pure imaginary.\  Theorem~\ref{tbk} and related results were proved by Krylov-Bogolyubov in a number of works in 1930's.\ The research was summarized in the book \cite{BM},\ also see in \cite{AKN}.
In our work we present a proof of  Theorem~\ref{tbk}, based on a variation of the Krylov-Bogolyubov argument,
developed recently for the purposes of partial differential equations in \cite{H-G1, HKM}.\ It allows to prove the averaging theorem above under minimal restrictions on the smoothness of the nonlinearity $ P$ -- only its Lipschitz continuity is required -- and it generalizes to a class of perturbative problems in PDEs.
Theorem~\ref{tbk} is proved in Sections 3-4.\ In Section 5, we discuss its applications to the case when (\ref{maineq1}) is a Hamiltonian system.\ We remind that the Krylov-Bogolyubov averaging method was the first rigorously justified averaging theory. Before the work of Krylov-Bogolyubov the
 method of averaging existed as a heuristic theory, after that other rigorous averaging theories were created,\ see in \cite{AKN}. In particular, now the method of averaging applies
 to  equations with added stochasticity. The approach of our work, enriched with the  ideas of the seminal  work \cite{Khas}, suits well to the situation
 when the stochasticity is added to the problem in the form a stochastic force; both in the ODE and PDE settings. See the second half of the paper
 \cite{HKM} and references in that work.

Our paper is based on the lecture notes for a course that SK was teaching at a CIMPA School on Dynamical Systems in Kathmandu (October 25 -- November 5, 2018),\ which was attended by WJ and YW.
\smallskip

\noindent
{\bf Notation.} Abbreviation l.h.s. (r.h.s.) stands for ``left hand side" (``right hand side").  By $\mathbb{R}_+$ (by $\mathbb{Z}_+$)
we denote the set of non-negative real numbers (non-negative integers),
denote by $B_R$  the
open ball $ B_{R}= \{v : |v| < R\}$, $ R > 0$,\ and   by  $ \bar{B}_{R}$ -- its closure.
\smallskip

\noindent
{\bf Acknowledgements.}
SK  was supported   by the grant  18-11-00032 of Russian Science Foundation.\ WJ and YW were supported by the National Natural Science Foundation of China (No. 11790272 and No. 11421061).

\section{preliminaries}
Consider again equation (\ref{maineq1}),\ and assume that the linear operator $A$ has $N$ eigenvalues, counted with their geometrical
multiplicities. Assume also that these eigenvalues are pure imaginary. Then they go in pairs $\pm i\lambda_j$, where
 $ 0 \neq \lambda_{j} \in \mathbb{R}$  (see \cite{G}). So $ N$ is an even number, $N = 2n$.\ The imposed restrictions on $ A$ are equivalent to the following conditions (see \cite{G}):
$Ker A = \{0\}$ and in $ \mathbb{R}^{2n}$  exists a basis $\{e_{1}^{+},e_{1}^{-},e_{2}^{+},e_{2}^{-},...,e_{n}^{+},e_{n}^{-}\}$ such that in the corresponding coordinates $ \{x_{1},y_{1},x_{2},y_{2},...,x_{n},y_{n}\}$,\ the matrix of the linear operator $ A$
has the form
\begin{eqnarray}\label{m}  \begin{pmatrix}
\begin{matrix} 0 &-\lambda_{1}\\ \lambda_{1}&0 \end{matrix} & \rvline &
  \bigzero & \rvline & \dots & \rvline & \bigzero \\
\hline
 \bigzero & \rvline & \begin{matrix}0&-\lambda_{2}\\ \lambda_{2}& 0\end{matrix} & \rvline & \dots & \rvline & \bigzero\\
\hline
\vdots & \rvline & \vdots& \rvline & \ddots& \rvline& \vdots\\
\hline
\bigzero & \rvline & \bigzero & \rvline & \dots & \rvline & \begin{matrix} 0& -\lambda_{n}\\ \lambda_{n} & 0\end{matrix}
\end{pmatrix}.
\end{eqnarray}
Thus,\
the original unperturbed  linear system (\ref{maineq})$|_{\epsilon=0}$ reads:
\begin{eqnarray}\label{*}
\begin{cases}
\dot{x}_{1}- \lambda_{1}y_{1}=0,\\
\dot{y}_{1}+\lambda_{1}x_{1}=0,\\
\cdots\\
\dot{x}_{n}- \lambda_{n}y_{n}=0,\\
\dot{y}_{n}+\lambda_{n}x_{n}=0.
\end{cases}
\end{eqnarray}
Note that this linear system  can be written in the Hamiltonian form
$$
\dot{x} = -\frac{\partial h}{\partial y},\qquad
\dot{y} = \frac{\partial h}{\partial x},
$$
where $ h =-\frac{1}{2} \sum_{j=1}^{n}\lambda_{j}(x^{2}_{j}+y^{2}_{j})$.

\subsection{Complex structures in $ \mathbb{R}^{2n}$ and real analysis in $\mathbb{C}^{n} $. Systems  (\ref{*}) and (\ref{maineq1}) in complex notation.
}
The systems (\ref{*}) and (\ref{maineq1}) can be written more compactly if we introduce in the space $ \mathbb{R}^{2n}$ a complex structure and write $ A$ and the perturbation $P$ in its terms. Corresponding construction is performed  in this section and is used below to prove Theorem~\ref{tbk};  the complex
language allows to shorten the proof significantly.

Vectors in the  space $ \mathbb{R}^{2n}$ are caracterised by
 the coordinates $ (x_{1},y_{1},x_{2},y_{2},...,x_{n},y_{n}) $.\ Let us introduce in $ \mathbb{R}^{2n}$ a complex structure by denoting
\begin{eqnarray}\label{0}
\begin{cases}
{z}_{1} = {x}_{1} + \mathbf{i}{y}_{1},\\
{z}_{2} = {x}_{2} + \mathbf{i}{y}_{2},\\
\dots\\
{z}_{n} = {x}_{n} + \mathbf{i}{y}_{n}.
\end{cases}
\end{eqnarray}
Then the real space $ \mathbb{R}^{2n}$ becomes a space of complex sequences $ z= (z_{1},z_{2},...,z_{n})$ with $ z_{j}\in \mathbb{C}$.\ That is,\ we have achieved that
\begin{eqnarray}\nonumber
\mathbb{R}^{2n}\simeq \mathbb{C}^{n}.
\end{eqnarray}
In the complex notation,\ the Euclidean scalar product $ \langle\cdot,\cdot\rangle$ in $ \mathbb{R}^{2n}\simeq \mathbb{C}^{n}$  reads
\begin{eqnarray}\label{0*}
\langle z,z'\rangle = \Re (\sum_{j} z_{j} \bar{z}'_{j})=: \Re(z\cdot \bar{z}').
\end{eqnarray}
For the real numbers $ \lambda_{1},\lambda_{2},...,\lambda_{n}$ as in \eqref{*}
 let us consider the linear operator
\begin{equation*}
 \text{diag}\{\mathbf{i}\lambda_{j}\}: \ \mathbb{C}^{n}\rightarrow \mathbb{C}^{n},
 \ \ (z_{1},z_{2},...,z_{n})\mapsto (\mathbf{i}\lambda_{1}z_{1},\mathbf{i}\lambda_{2}z_{2},...,\mathbf{i}\lambda_{n}z_{n}).
\end{equation*}
In the real coordinates $ (x_{1},y_{1},x_{2},y_{2},...,x_{n},y_{n})$  it  reads
\[
(x_{1},y_{1},x_{2},y_{2},...,x_{n},y_{n})\mapsto (-\lambda_{1}y_{1},\lambda_{1}x_{1}, -\lambda_{2}y_{2},\lambda_{2}x_{2},...,-\lambda_{n}y_{n},\lambda_{n}x_{n}).
\]
That is,\ in the complex coordinates the operator $ A$ with the matrix (\ref{m}) is the operator $\text{diag}\{\mathbf{i}\lambda_{j}\} $,\ so the system of linear equations (\ref{maineq})$|_{\epsilon=0}=$\,\eqref{*}
reduces to the  diagonal complex system
$$
\dot{v}_{j}+ \mathbf{i}\lambda_{j} v_{j} =0,\ \  1\leq j\leq n.
$$

 In the complex notation the perturbed  system (\ref{maineq}) reads
\begin{eqnarray}\label{2}
\dot{v}_{j}+ \mathbf{i}\lambda_{j} v_{j} = \epsilon P_{j}(v),\ \ v(0)=v_0,\ \    v= (v_{1},v_{2},...,v_{n})\in \mathbb{C}^{n}.
\end{eqnarray}
Below we assume that the vector-field $ P$ is locally  Lipschitz, i.e. its restrictions to bounded balls $ B_{R},\ R > 0 $, are Lipschitz-continuous. The case of polynomial vector-field $ P$ will be for us of special interest,\ and we start with its brief discussion.
\begin{definition}
A complex function
$ F : \mathbb{C}^{n}\rightarrow \mathbb{C}$ is a polynomial if it can be written as
$$
 F(z) = \underset{0\leq |\alpha|,|\beta|\leq {M}}{\sum} C_{\alpha\beta}z^{\alpha}\bar{z}^{\beta},
$$
where $ \alpha=(\alpha_{1},\alpha_{2},\dots, \alpha_{n}), \beta=(\beta_{1},\beta_{2},\dots, \beta_{n}) \in \mathbb{Z}_{+}^{n}$
 are multi-indices with the norm $ |\alpha|= |\alpha_{1}|+|\alpha_{2}|+\cdot\cdot\cdot+|\alpha_{n}|$,\ $ C_{\alpha\beta}$ are some complex numbers,\ and
\[
z^{\alpha} = \prod^{n}_{j=1}z_{j}^{\alpha_{j}},\qquad \bar{z}^{\beta}= \prod^{n}_{j=1}\bar{z}_{j}^{\beta_{j}}.
\]
\end{definition}\label{5*}
\begin{definition}
A vector-field $ P(z)$ is polynomial if every its component $ P_{j}$ is a polynomial function.
\end{definition}
We recall that for a function $ f(z)$ (real or complex) of a complex variable $z= x+ \mathbf{i}y$,
the derivatives ${\partial f}/{\partial z}$ and ${\partial f} /{\partial \bar z}$ are defined as
$
\frac{\partial f}{\partial z}= \frac{1}{2}(\frac{\partial f}{\partial x}-\mathbf{i}\frac{\partial f}{\partial y})$ and
$\frac{\partial f}{\partial \bar{z}}= \frac{1}{2}(\frac{\partial f}{\partial x}+\mathbf{i}\frac{\partial f}{\partial y}).
$
The   lemma  below follows by elementary calculation:
\begin{lemma}\label{b1}
Let $ z\in\mathbb{C}$ and consider a complex polynomial $ F(z) = \underset{m,n=1}{\overset{M}{\sum}} C_{mn}z^{m}\bar{z}^{n}$.\ Then
$
\frac{\partial F}{\partial z} = \underset{m,n=1}{\overset{M}{\sum}} mC_{mn}z^{m-1}\bar{z}^{n},
$ and $
\frac{\partial F}{\partial \bar{z}} = \underset{m,n=1}{\overset{M}{\sum}} nC_{mn}z^{m}\bar{z}^{n-1}.
$
 If $ F : \mathbb{C}^{n}\rightarrow \mathbb{R}$ is a real-valued $ C^{1}$-smooth function,\ then
$ \overline{\frac{\partial F}{\partial z_{j}}} = \frac{\partial F}{\partial \bar{z}_{j}}$,
for any $ 1\leq j\leq n $.
\end{lemma}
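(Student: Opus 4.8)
The plan is to reduce everything to three elementary facts about the Wirtinger operators $\partial/\partial z$ and $\partial/\partial\bar z$: that they are $\mathbb{C}$-linear, that they obey the Leibniz product rule, and that they act correctly on the two ``coordinate'' functions $z$ and $\bar z$. The first two properties are inherited immediately from the definitions $\frac{\partial}{\partial z}=\frac12(\frac{\partial}{\partial x}-\mathbf{i}\frac{\partial}{\partial y})$ and $\frac{\partial}{\partial\bar z}=\frac12(\frac{\partial}{\partial x}+\mathbf{i}\frac{\partial}{\partial y})$, since $\partial/\partial x$ and $\partial/\partial y$ are themselves linear and satisfy the product rule, and any fixed linear combination of them does too. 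For the coordinate functions I would compute directly from the definitions, writing $z=x+\mathbf{i}y$ and $\bar z=x-\mathbf{i}y$, that
\[
\frac{\partial z}{\partial z}=1,\quad \frac{\partial \bar z}{\partial z}=0,\quad \frac{\partial z}{\partial\bar z}=0,\quad \frac{\partial \bar z}{\partial\bar z}=1,
\]
each of which is a one-line check.

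With these in hand, the first two formulas follow by induction on the exponents. Applying the product rule to $z^{m}\bar z^{n}=z\cdot z^{m-1}\bar z^{n}$ and using $\partial_z z=1$, $\partial_z\bar z=0$ gives $\partial_z(z^{m}\bar z^{n})=m\,z^{m-1}\bar z^{n}$; symmetrically, splitting off a factor of $\bar z$ and using $\partial_{\bar z}\bar z=1$, $\partial_{\bar z} z=0$ yields $\partial_{\bar z}(z^{m}\bar z^{n})=n\,z^{m}\bar z^{n-1}$. By $\mathbb{C}$-linearity of the two operators, summing over $m,n$ against the coefficients $C_{mn}$ produces exactly the claimed expressions for $\partial F/\partial z$ and $\partial F/\partial\bar z$.

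For the conjugation identity the argument is even shorter and uses no polynomial structure. Since $F:\mathbb{C}^{n}\to\mathbb{R}$ is real-valued, the real partial derivatives $\partial F/\partial x_j$ and $\partial F/\partial y_j$ are real, hence unchanged under conjugation. Thus, directly from the definition of $\partial F/\partial z_j$,
\[
\overline{\frac{\partial F}{\partial z_j}}=\overline{\tfrac12\Big(\frac{\partial F}{\partial x_j}-\mathbf{i}\frac{\partial F}{\partial y_j}\Big)}=\tfrac12\Big(\frac{\partial F}{\partial x_j}+\mathbf{i}\frac{\partial F}{\partial y_j}\Big)=\frac{\partial F}{\partial\bar z_j},
\]
where conjugation simply flips the sign in front of $\mathbf{i}$. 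The $C^{1}$ hypothesis enters only to guarantee that these partial derivatives exist, so that the Wirtinger combinations are well defined.

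There is no genuine obstacle here; the lemma is precisely the kind of ``elementary calculation'' the text advertises. The only points meriting care are confirming that $\partial/\partial z$ and $\partial/\partial\bar z$ truly inherit the Leibniz rule from $\partial/\partial x,\partial/\partial y$, so that the induction step is legitimate, and keeping straight which coordinate derivative vanishes, so as not to pick up spurious cross terms. Everything else is forced.
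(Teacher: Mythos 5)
Your proof is correct, and it is exactly the ``elementary calculation'' the paper alludes to: the paper states Lemma \ref{b1} without proof, and your argument (Wirtinger operators as $\mathbb{C}$-linear derivations, the four coordinate computations $\partial_z z=1$, $\partial_z\bar z=0$, $\partial_{\bar z}z=0$, $\partial_{\bar z}\bar z=1$, plus the conjugation symmetry for real-valued $F$) supplies the omitted details with no gaps.
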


\begin{example}\label{ex1}
For $n=1$,\ consider the system of equations
\begin{equation}\label{ss1}
\dot x -\omega y =2\eps x(x^2-y^2),\quad \dot y +\omega x=4\eps yx^2.
\end{equation}
Now ${\color{red}\lambda}=\omega$ and in the complex coordinate $v=x+iy$ the system reads
\begin{equation}\label{s1}
\dot v+i  \lambda v= \eps( v^2\bar v + v^3).
\end{equation}
\end{example}
\medskip

Let us come back to the general case of locally Lipschitz vector-fields $ P$.

\begin{definition}
\label{8} Let $ \mathcal{X} : \mathbb{R}_{+} \rightarrow \mathbb{R}_{+}$ be a non-decreasing continuous function and $ f : \mathbb{C}^{n}\rightarrow \mathbb{C}^{n}$ be a continuous vector-field.\ We say that $ f \in \text{Lip}_{\mathcal{X}}(\mathbb{C}^{n},\mathbb{C}^{n})$,\ if for any $R \geq 0 $,\ $|f|_{B_{R}}\leq \mathcal{X}(R)$ and
$ \text{Lip} f|_{B_{R}} \leq \mathcal{X}(R)$.
\end{definition}
\begin{example}
 Let $ P : \mathbb{C}^{n}\rightarrow \mathbb{C}^{n}$ be a $ C^{1}$-smooth vector-field. For
 $ v\in \mathbb{C}^{n} $\ we denote by $ dP(v)$ the  differential of $ P$ at $ v$ (this is a linear  over real numbers map
 from $ \mathbb{C}^{n}$ to $ \mathbb{C}^{n}$).  Denote
 $ \widetilde{\mathcal{X}}(R;P)= \max\left\{\sup_{B_{R}}\|dP(v)\|, \sup_{B_{R}}|P(v)|\right\}$. Then $ \widetilde{\mathcal{X}}$ defines
  a continuous function of $ R \ge 0$,
  and $  P \in \text{Lip}_{\widetilde{\mathcal{X}}}(\mathbb{C}^{n},\mathbb{C}^{n})$.\ Indeed,\ the continuity of $ \widetilde{\mathcal{X}}$
   is obvious,\ while the second property follows from the mean-value theorem which implies that
$$ |P(u_2)- P(u_1)| \leq \widetilde{\mathcal{X}}(R;P)|u_{2}-u_{1}|\ \ \  \text{if} \ \ u_{1}, u_{2} \in B_{R}.$$
\end{example}

\begin{lemma}\label{a2}
Let $ P \in \text{Lip}_{\mathcal{X}}(\mathbb{C}^{n},\mathbb{C}^{n})$ and $ v_0 \in \bar{B}_{R}$.\ Denote $\theta= \frac{R}{\mathcal{X}(2R)}$.\ Then a solution $ v(t)$ of (\ref{2}) exists for  $|t|\leq \epsilon^{-1}\theta $ and stays in the ball $ \bar{B}_{2R}$.
\end{lemma}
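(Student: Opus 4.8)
The plan is to combine the standard Cauchy--Lipschitz local existence theory with an \emph{a priori} bound that exploits the skew-symmetry of the linear part of (\ref{2}). Writing (\ref{2}) as $\dot v = -Av + \epsilon P(v)$ with $A=\mathrm{diag}\{\mathbf{i}\lambda_j\}$, the right-hand side is locally Lipschitz (the linear term is globally Lipschitz and $P\in\mathrm{Lip}_{\mathcal X}$), so for $v_0\in\bar B_R$ there is a unique maximal solution $v(t)$ defined on an open interval $(t_-,t_+)\ni 0$, and by the standard continuation criterion, if $t_+<\infty$ then $|v(t)|\to\infty$ as $t\uparrow t_+$ (and similarly at $t_-$). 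It therefore suffices to show that $|v(t)|$ cannot exceed $2R$ on $[0,\epsilon^{-1}\theta]\cap(t_-,t_+)$; the analogous estimate for negative times follows by the same computation (or by reversing time).

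Next I would derive the key differential inequality. Since each $\lambda_j$ is real, the contribution of the linear term to the rate of change of the norm is purely imaginary in each coordinate:
\[
\tfrac{d}{dt}|v|^2 = 2\Re\sum_j \dot v_j\,\bar v_j
 = 2\Re\sum_j\bigl(-\mathbf{i}\lambda_j|v_j|^2 + \epsilon P_j(v)\bar v_j\bigr)
 = 2\epsilon\,\Re\bigl(P(v)\cdot\bar v\bigr),
\]
so that $\frac{d}{dt}|v|^2 \le 2\epsilon\,|P(v)|\,|v|$ by Cauchy--Schwarz and hence $\frac{d}{dt}|v|\le \epsilon\,|P(v)|$. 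Equivalently, $e^{-tA}$ is unitary on $\mathbb{C}^n$, so Duhamel's formula gives $|v(t)|\le |v_0|+\epsilon\int_0^t|P(v(s))|\,\mathrm{d}s$. This is where the pure-imaginarity of the spectrum of $A$ enters: it makes the unperturbed flow an isometry, so only the perturbation can cause growth of the norm.

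Then comes the continuation (bootstrap) argument, which I expect to be the only delicate point. Set $t^* = \sup\{\,t\in[0,t_+) : |v(s)|\le 2R\ \text{for all } s\in[0,t]\,\}$. As long as $s\le t^*$ we have $|v(s)|\le 2R$, hence $|P(v(s))|\le\mathcal X(2R)$ by the definition of $\mathrm{Lip}_{\mathcal X}$, so integrating the differential inequality gives
\[
|v(t)|\le |v_0| + \epsilon\,\mathcal X(2R)\,t \le R + \epsilon\,\mathcal X(2R)\,t\qquad (0\le t\le t^*).
\]
For $t\le \epsilon^{-1}\theta=\epsilon^{-1}R/\mathcal X(2R)$ the right-hand side is $\le 2R$, with strict inequality for $t<\epsilon^{-1}\theta$. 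If we had $t^*<\min(t_+,\epsilon^{-1}\theta)$, continuity would force $|v(t^*)|=2R$, contradicting the strict bound $|v(t^*)|<2R$ just obtained; hence $t^*\ge\min(t_+,\epsilon^{-1}\theta)$, i.e.\ $|v(t)|\le 2R$ throughout $[0,\min(t_+,\epsilon^{-1}\theta))$. Finally, if $t_+$ were $\le\epsilon^{-1}\theta$ the solution would remain in the compact ball $\bar B_{2R}$ up to $t_+$, contradicting the escape criterion; therefore $t_+>\epsilon^{-1}\theta$, and $v$ exists and stays in $\bar B_{2R}$ on $[0,\epsilon^{-1}\theta]$. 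Repeating the argument for $t\le 0$ completes the proof.
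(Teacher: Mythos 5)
Your proof is correct and follows essentially the same route as the paper: the same energy identity $\frac{1}{2}\frac{d}{dt}|v|^2=\epsilon\,\Re\bigl(P(v)\cdot\bar v\bigr)$ (the skew linear part dropping out), the bound $|v(t)|\le R+\epsilon\,\mathcal{X}(2R)\,t<2R$, and a first-exit-time continuation argument, which the paper phrases via $T=\inf\{t\in[0,\epsilon^{-1}\theta]:|v(t)|\ge 2R\}$ rather than your supremum $t^*$. Your write-up is merely more explicit about the maximal existence interval, the blow-up alternative, and the negative-time case, which the paper leaves implicit.
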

\begin{proof}
Since $ P$ is a  locally Lipschitz vector-field,\ then a solution $ v(t)$ of equation (\ref{2}) exists till the blow-up time.
Taking the scalar product of equation (\ref{2}) with $ v(t)$ (see (\ref{0*})),\ we get
\[
\frac{1}{2}\frac{\text{d}}{\text{d}t}|v(t)|^{2} = - \mathbf{i} \langle \text{diag}(\lambda_{j})v,v\rangle + \epsilon \langle P(v),v\rangle =\epsilon \langle P(v),v\rangle.
\]
Denote $ T = \inf\left\{t \in [0,\epsilon^{-1}\theta]: |v(t)|\geq 2R\right\}$,\ where $ T$ equals $\epsilon^{-1}\theta$ if the set under the inf-sign is empty.\ Then for $ 0 <t \leq T$ we have
\[
\frac{1}{2}\frac{\text{d}}{\text{d}t}|v(t)|^{2}\leq \epsilon |v||P(v)|\leq \epsilon \mathcal{X}(2R)|v|.
\]
Thus,\ $ \frac{\text{d}}{\text{d}t}|v(t)| \leq \epsilon \mathcal{X}(2R)$ and $ |v(t)| \leq R + \epsilon \mathcal{X}(2R)t < 2R$
for all $ 0 <t <\epsilon^{-1} \theta$.\ So $ T= \epsilon^{-1}\theta $ and the result follows.
\end{proof}

\subsection{Slow time and interaction representation}
Denote  $ \tau = \epsilon t$.\ Then
$
\frac{\partial v}{\partial t}= 
   \epsilon \frac{\partial v}{\partial \tau}
$,\
so equations (\ref{2}) reduce to
\begin{eqnarray}\label{10}
\frac{\partial{v}_{j}}{\partial \tau}+ \mathbf{i}\epsilon^{-1}\lambda_{j} v_{j} = P_{j}(v),\ \ \ \  1\leq j \leq n.
\end{eqnarray}
Let us substitute in (\ref{10})
\begin{eqnarray}\label{11}
{v}_{j}(\tau) = e^{- \mathbf{i}\epsilon^{-1}\lambda_{j}\tau}a_{j}(\tau),\ \ \ \  1\leq j \leq n.
\end{eqnarray}
Then (\ref{10}) becomes
\begin{eqnarray}\label{12}
\dot{{a}}_{j} = e^{ \mathbf{i}\epsilon^{-1}\lambda_{j}\tau}P_{j}(v),\ \ \ \  1\leq j \leq n.
\end{eqnarray}
Denote $ a(\tau)= (a_{1}(\tau),a_{2}(\tau),...,a_{n}(\tau))\in  \mathbb{C}^{n}$.\
For a real vector $ w = (w_{1},w_{2},...,w_{n}) \in\mathbb{R}^{n}$ let $\Phi_{w}$ be the rotation operator
$$
\label{13} \Phi_{w}: \mathbb{C}^{n}\rightarrow \mathbb{C}^{n},\ \ \ \ \Phi_{w}= \text{diag}\{ e^{\mathbf{i}w_{1}},e^{\mathbf{i}w_{2}},...,e^{\mathbf{i}w_{n}}\}.
$$
It is easy to see that
\[
(\Phi_{w})^{-1}= \Phi_{-w},\quad  \Phi_{w_{1}}\circ\Phi_{w_{2}}= \Phi_{w_{1}+w_{2}}, \quad \Phi_0= \text{id},
\]
and that each $\Phi_{w}$ is a unitary transformation.\\
Denote by $ \Lambda $ the vector $ (\lambda_{1},\lambda_{2},...,\lambda_{n}) \in \mathbb{R}^{n}$.\ Then (\ref{11}) can be written as
$
{v}(\tau) = \Phi_{- \tau\epsilon^{-1}\Lambda}a(\tau),
$
or
$
{a}(\tau) = \Phi_{\tau\epsilon^{-1}\Lambda}v(\tau).
$
Thus,\ the system (\ref{12}) reads as
\begin{eqnarray}\label{maineq**}
\frac{\partial a}{\partial \tau} = \Phi_{\tau\epsilon^{-1}\Lambda}\circ P(\Phi_{- \tau\epsilon^{-1}\Lambda}a(\tau)),\ \ \ \   |\tau|\leq \theta,
\end{eqnarray}
with the initial condition
\begin{eqnarray}
\label{17} a(0)= v_0,\ \ \ \  |v_0|=: R.
\end{eqnarray}
Note that
\begin{eqnarray}
\label{18} |a_{j}(\tau)| = |v_{j}(\tau)|,\ \ \ \   \forall\tau,\ \ \ \ 1\leq j \leq n.
\end{eqnarray}

\section{Averaging of vector-fields }\label{18*} We recall that a diffeomorphism $ G: \mathbb{R}^{2n}\rightarrow \mathbb{R}^{2n}$ transforms a vector-field $ W$ on $ \mathbb{R}^{2n}$ to the vector-field
$
(G_{*}W)(v) = dG(u)(W(u))$, $ u= G^{-1}(v)
$,\
see in \cite{Arn}.\ Accordingly,\ a linear isomorphism $ \Phi_{\Lambda t},\ t\in \R$,\ transforms the vector-field $ P$ to
$$
((\Phi_{\Lambda t})_{*}P)(v)= \Phi_{\Lambda t}\circ P(\Phi_{-\Lambda t}  v).
$$
Our goal in this section is to study the averaging in $ t$ of the vector-field above.

For a continuous vector-field $ P $ on $\mathbb{C}^{n}$ and a vector $ \Lambda\in (\mathbb{R}\setminus \{0\})^{n}$,\ we denote
\begin{eqnarray}\label{19}
\langle\langle P\rangle\rangle(a) = \lim_{T\rightarrow \pm\infty}\frac{1}{|T|}\int_{0}^{T}\Phi_{\Lambda t}\circ P(\Phi_{-\Lambda t}a)\mathrm{d} t,
\end{eqnarray}
if the limit exists (for $T<0$ we understand $\int_0^T\dots \text{d}t$ as the integral $\int_T^0\dots \text{d}t$).
Denote
\begin{eqnarray}
\label{20}
y^{t}(a) = \Phi_{\Lambda t}\circ P(\Phi_{-\Lambda t}a),\ \
\end{eqnarray}
and for $T\ne0$ set
\begin{eqnarray}
\label{21} \langle\langle P\rangle\rangle^{T}(a) = \frac{1}{|T|}\int_{0}^{T}y^{t}(a) \mathrm{d} t.
\end{eqnarray}
Then
$$
\langle\langle P\rangle\rangle(a) = \lim_{T\rightarrow \pm\infty} \langle\langle P\rangle\rangle^{T}(a).
$$
The vector-field  $ \langle\langle P\rangle\rangle$ is called the averaging of a field  $ P$ in the direction of a vector $ \Lambda$,\ and $ \langle\langle P\rangle\rangle^{T}(a)$ is called the partial  averaging.\ The latter always exists.\ Our goal in this section is to prove that the former also exists,\ if the mapping $ P$ is locally Lipschitz.\ To indicate the dependence of the two introduced objects on $ \Lambda$,\ sometimes we will write them as $\langle\langle\cdot\rangle\rangle_{\Lambda}$ and $ \langle\langle\cdot\rangle\rangle^{T}_{\Lambda}$.\ We recall that being written in the special basis the matrix of operator $ A$ takes the form (\ref{m}), and that after introducing in $ \mathbb{R}^{2n}$ the complex structure (\ref{0}) the matrix becomes $ \text{diag}\{\mathbf{i}\lambda_{j}\}$.\ Since $ \Phi_{\Lambda t}= \exp(\text{diag}\{\mathbf{i}\lambda_{j}\}t)$,\ then the definition of $ \langle\langle P\rangle\rangle$ agrees with that in (\ref{tbk1}).

\begin{lemma}\label{a3}
If $ P \in \text{Lip}_{\mathcal{X}}(\mathbb{C}^{n},\mathbb{C}^{n})$,\ then
$ \langle\langle P\rangle\rangle^{T} \in \text{Lip}_{\mathcal{X}}(\mathbb{C}^{n},\mathbb{C}^{n})$ for any $ T\ne0$.
\end{lemma}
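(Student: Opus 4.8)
The plan is to verify directly the two defining properties of membership in $\text{Lip}_{\mathcal{X}}(\mathbb{C}^{n},\mathbb{C}^{n})$ for the partial averaging $\langle\langle P\rangle\rangle^{T}$: the pointwise bound $|\langle\langle P\rangle\rangle^{T}|_{B_{R}}\le\mathcal{X}(R)$ and the Lipschitz bound $\text{Lip}\,\langle\langle P\rangle\rangle^{T}|_{B_{R}}\le\mathcal{X}(R)$, for every $R\ge 0$. The crucial observation, which makes both estimates immediate, is that each operator $\Phi_{\Lambda t}$ is unitary (as noted right after its definition), hence norm-preserving; in particular $|\Phi_{-\Lambda t}a|=|a|$, so $\Phi_{-\Lambda t}$ maps the ball $B_{R}$ into itself for every $t$.

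First I would establish the sup bound. For $a\in B_{R}$ and any $t$, using that $\Phi_{\Lambda t}$ is unitary and that $\Phi_{-\Lambda t}a\in B_{R}$, together with the bound $|P|_{B_{R}}\le\mathcal{X}(R)$ coming from $P\in\text{Lip}_{\mathcal{X}}$, we get
$$
|y^{t}(a)|=|\Phi_{\Lambda t}P(\Phi_{-\Lambda t}a)|=|P(\Phi_{-\Lambda t}a)|\le\mathcal{X}(R).
$$
Integrating this over the interval from $0$ to $T$ and dividing by $|T|$ — recalling the convention that for $T<0$ one reads $\int_{0}^{T}$ as $\int_{T}^{0}$, so that the domain of integration has length exactly $|T|$ — yields $|\langle\langle P\rangle\rangle^{T}(a)|\le\mathcal{X}(R)$ by the triangle inequality for integrals.

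Next I would treat the Lipschitz bound in the same spirit. For $a,a'\in B_{R}$, unitarity of $\Phi_{\Lambda t}$ gives
$$
|y^{t}(a)-y^{t}(a')|=|P(\Phi_{-\Lambda t}a)-P(\Phi_{-\Lambda t}a')|,
$$
and since both points $\Phi_{-\Lambda t}a$ and $\Phi_{-\Lambda t}a'$ lie in $B_{R}$, the Lipschitz bound $\text{Lip}\,P|_{B_{R}}\le\mathcal{X}(R)$ applies; using once more that $\Phi_{-\Lambda t}$ preserves norms we obtain
$$
|y^{t}(a)-y^{t}(a')|\le\mathcal{X}(R)\,|\Phi_{-\Lambda t}(a-a')|=\mathcal{X}(R)\,|a-a'|.
$$
Averaging over $t$ exactly as before gives $|\langle\langle P\rangle\rangle^{T}(a)-\langle\langle P\rangle\rangle^{T}(a')|\le\mathcal{X}(R)\,|a-a'|$, which is the required Lipschitz estimate, and the two bounds together establish the claim.

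There is no genuine obstacle here: the argument is entirely driven by the unitarity of the rotations $\Phi_{\Lambda t}$, which lets every norm and Lipschitz estimate for $P$ on $B_{R}$ pass through the conjugation $\Phi_{\Lambda t}\circ P\circ\Phi_{-\Lambda t}$ unchanged, and then survive the normalized integration by the triangle inequality. The only points requiring a little care are the bookkeeping of the $1/|T|$ normalization and the sign convention for $T<0$, both of which are handled by working with the integral over a domain of length $|T|$; note in particular that the same function $\mathcal{X}$ serves for $\langle\langle P\rangle\rangle^{T}$ with no enlargement of the constant, which is precisely what the statement asserts.
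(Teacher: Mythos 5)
Your proof is correct and follows essentially the same route as the paper's: both arguments rest on the unitarity of $\Phi_{\Lambda t}$ (so that $\Phi_{-\Lambda t}$ preserves the ball $B_R$ and norms of differences), derive the pointwise bounds $|y^{t}(a)|\le\mathcal{X}(R)$ and $|y^{t}(a)-y^{t}(a')|\le\mathcal{X}(R)|a-a'|$, and then pass these bounds through the normalized integral defining $\langle\langle P\rangle\rangle^{T}$. The only cosmetic difference is that the paper bounds the normalized integral by $\sup_{|t|\le|T|}|y^{t}(a)|$ whereas you integrate and invoke the triangle inequality directly, which amounts to the same estimate.
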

\begin{proof}
If $ a \in \bar{B}_{R}$,\ then $ \Phi_{-\Lambda t}a \in \bar{B}_{R}$.\ Since $ P \in \text{Lip}_{\mathcal{X}}(\mathbb{C}^{n},\mathbb{C}^{n})$,\ one obtains
$
|P(\Phi_{-\Lambda t}a)| \leq \mathcal{X}(R)
$,\ for each $ t$,\
and then
\begin{eqnarray}\label{22}
| y^{t}(a)| \leq \mathcal{X}(R),\ \ \ \  \forall t.
\end{eqnarray}
Similarly,\ for any $ a_{1}, a_{2} \in \bar{B}_{R} $,\ we have
\begin{eqnarray}
\label{23} | y^{t}(a_{2})- y^{t}(a_{1})| &=& |P(\Phi_{-\Lambda t}a_{2})-P(\Phi_{-\Lambda t}a_{1})|\\
\nonumber &\leq& \mathcal{X}(R)|\Phi_{-\Lambda t}a_{2} - \Phi_{-\Lambda t}a_{1}|\\
\nonumber &\leq& \mathcal{X}(R)|a_{2} - a_{1}|,\ \ \ \   \forall t.
\end{eqnarray}
From (\ref{22}) and (\ref{23}),\ one obtains
\begin{eqnarray}\nonumber
| \langle\langle P\rangle\rangle^{T}(a)| \leq \sup_{| t| \leq |T|}| y^{t}(a)|\leq \mathcal{X}(R),
\end{eqnarray}
and
\begin{eqnarray}
\nonumber  &&| \langle\langle P\rangle\rangle^{T}(a_{2})- \langle\langle P\rangle\rangle^{T}(a_{1})|
\leq\sup_{| t| \leq |T|}| y^{t}(a_{2})- y^{t}(a_{1})|
\leq \mathcal{X}(R)|a_{2} - a_{1}|.
\end{eqnarray}
Thus,\ $\langle\langle P\rangle\rangle^{T} \in \text{Lip}_{\mathcal{X}}(\mathbb{C}^{n},\mathbb{C}^{n})$ for  $ T\ne0$.
\end{proof}

\begin{lemma}[The main lemma of averaging]\label{a4}
For any $ \Lambda \in (\mathbb{R}\setminus \{0\})^{n}$ and $ P \in \text{Lip}_{\mathcal{X}}(\mathbb{C}^{n},\mathbb{C}^{n})$,\ the limit of (\ref{19}) exists for any $a \in \mathbb{C}^{n}$,\ and  $ \langle\langle P\rangle\rangle \in \text{Lip}_{\mathcal{X}}(\mathbb{C}^{n},\mathbb{C}^{n})$.\ If $ a \in \bar{B}_{R}$,\ then the rate of convergence in  (\ref{19}) depends only on $ R, \Lambda$ and $ P$.
\end{lemma}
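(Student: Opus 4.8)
The plan is to deduce the general statement from the special case of polynomial vector-fields, for which the limit in~(\ref{19}) can be computed explicitly, and then to propagate convergence to an arbitrary locally Lipschitz $P$ by a uniform approximation argument built on the equi-Lipschitz bounds of Lemma~\ref{a3}. The reason for proceeding in this order is that a polynomial carries only finitely many oscillation frequencies, so no small-divisor phenomenon can occur: all the convergence is manufactured at the polynomial level, and the Lipschitz control then does the rest.

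First I would treat a single monomial. If the $j$-th component of $P$ is $C_{\alpha\beta}z^{\alpha}\bar z^{\beta}$, then since $\Phi_{-\Lambda t}=\text{diag}\{e^{-\mathbf{i}\lambda_k t}\}$ and $(\Phi_{\Lambda t}w)_j=e^{\mathbf{i}\lambda_j t}w_j$, a direct computation gives
\begin{equation*}
y^{t}_{j}(a)=C_{\alpha\beta}\,a^{\alpha}\bar a^{\beta}\,e^{\mathbf{i}\omega t},\qquad \omega=\omega_{j,\alpha,\beta}:=\lambda_j-\sum_{k=1}^{n}\lambda_k(\alpha_k-\beta_k).
\end{equation*}
Consequently $\langle\langle P\rangle\rangle^{T}_{j}(a)=C_{\alpha\beta}a^{\alpha}\bar a^{\beta}\,|T|^{-1}\int_{0}^{T}e^{\mathbf{i}\omega t}\,\mathrm{d}t$. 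When $\omega=0$ this equals $C_{\alpha\beta}a^{\alpha}\bar a^{\beta}$ for every $T$, while when $\omega\ne0$ its modulus is at most $2/(|\omega|\,|T|)$, which tends to $0$ as $T\to\pm\infty$. Summing over the finitely many monomials of a polynomial vector-field $Q$, I conclude that $\langle\langle Q\rangle\rangle^{T}$ converges, both as $T\to+\infty$ and as $T\to-\infty$, to the same limit $\langle\langle Q\rangle\rangle$ (the sum of the resonant monomials, those with $\omega=0$), with an explicit rate: on $\bar B_R$ one has $\sup_{\bar B_R}|\langle\langle Q\rangle\rangle^{T}-\langle\langle Q\rangle\rangle|\le C_Q/(\delta_Q|T|)$, where $\delta_Q>0$ is the least nonzero value of $|\omega|$ among the monomials of $Q$, and $C_Q$ collects their coefficients and sup-norms on $\bar B_R$.

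Next comes the approximation step. Given $\eps>0$ and $R>0$, Stone--Weierstrass applied in the real coordinates $(x_k,y_k)$ (equivalently, in $z_k,\bar z_k$) produces a polynomial vector-field $Q$ with $\sup_{\bar B_R}|P-Q|<\eps$. Because each $\Phi_{\pm\Lambda t}$ is unitary and maps $\bar B_R$ into itself, for every $T\ne0$ and every $a\in\bar B_R$,
\begin{equation*}
|\langle\langle P\rangle\rangle^{T}(a)-\langle\langle Q\rangle\rangle^{T}(a)|\le\sup_{|t|\le|T|}|P(\Phi_{-\Lambda t}a)-Q(\Phi_{-\Lambda t}a)|\le\sup_{\bar B_R}|P-Q|<\eps.
\end{equation*}
Combining this uniform-in-$T$ estimate with the polynomial convergence of the previous step through the triangle inequality shows that $\{\langle\langle P\rangle\rangle^{T}\}$ is uniformly Cauchy on $\bar B_R$ as $T\to+\infty$ and as $T\to-\infty$, with a threshold $T_0$ depending only on $\eps,R,\Lambda$ and $P$ (through $Q$). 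Hence $\langle\langle P\rangle\rangle^{T}$ converges uniformly on $\bar B_R$; the two one-sided limits coincide, since each lies within $\eps$ of the common polynomial limit for every $\eps$; the limit $\langle\langle P\rangle\rangle$ is continuous; and the bounds $|\cdot|_{B_R}\le\mathcal{X}(R)$, $\text{Lip}(\cdot)|_{B_R}\le\mathcal{X}(R)$ of Lemma~\ref{a3} pass to the limit, so $\langle\langle P\rangle\rangle\in\text{Lip}_{\mathcal{X}}(\mathbb{C}^{n},\mathbb{C}^{n})$. Since $T_0$ depends only on $R,\Lambda,P$ and not on the particular $a\in\bar B_R$, the rate of convergence is uniform on the ball, as claimed.

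I expect the only genuine obstacle to be organizational rather than technical: the monomial computation is elementary and the soft part (equicontinuity, Cauchy criterion, passage to the limit) is routine. The one point that must be handled with care is to perform the averaging limit for polynomials \emph{first}, so that the finitely many frequencies keep $\delta_Q$ bounded away from zero; approximating $P$ by $Q$ before averaging is exactly what lets the whole argument sidestep the small-divisor difficulties that a direct treatment of a general quasi-periodic average would raise.
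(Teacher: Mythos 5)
Your proof is correct and follows essentially the same route as the paper's: the explicit monomial/resonance computation for polynomial fields, Stone--Weierstrass approximation on $\bar B_R$ combined with unitarity of $\Phi_{\Lambda t}$ to get an error bound uniform in $T$, and a Cauchy/triangle-inequality argument, with the Lipschitz bounds of the limit inherited from Lemma~\ref{a3}. The only difference is cosmetic: you record an explicit polynomial rate $C_Q/(\delta_Q|T|)$, while the paper just notes that the rate depends only on $R$, $\Lambda$ and $P$.
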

Before proving the general case of Lemma \ref{a4},\ we firstly consider the  case when $ P$ is a polynomial vector-field.\ Then,\
\begin{eqnarray}\label{23*}
P_{j}(v)= \sum_{0 \leq |\alpha|,|\beta|\leq N}C^{\alpha\beta}_{j}v^{\alpha}\bar{v}^{\beta},\ \  1 \leq j\leq n,
\end{eqnarray}
and one has
\begin{eqnarray}
\nonumber y^{t}_{j}(v) &=& e^{\mathbf{i}\lambda_{j}t}\sum_{0 \leq |\alpha|,|\beta|\leq N}C^{\alpha\beta}_{j}\prod_{i}(e^{-\mathbf{i}\lambda_{i}t}v_{i})^{\alpha_{i}}
\prod_{i}(e^{\mathbf{i}\lambda_{i}t}\bar{v}_{i})^{\beta_{i}}\\
\nonumber &=& \sum_{0 \leq |\alpha|,|\beta|\leq N}C^{\alpha\beta}_{j}
e^{\mathbf{i}t(\lambda_{j}-\Lambda\cdot \alpha + \Lambda\cdot \beta)}v^{\alpha} \bar{v}^{\beta}.
\end{eqnarray}
It follows that
\begin{eqnarray}\nonumber
 \langle\langle P\rangle\rangle^{T}_j(v) = \sum_{0 \leq |\alpha|,|\beta|\leq N}C^{\alpha\beta}_{j}\left(\frac{1}{|T|}\int_{0}^{T} e^{\mathbf{i}t(\lambda_{j}-\Lambda\cdot \alpha + \Lambda\cdot \beta)}\mathrm{d} t \right)v^{\alpha} \bar{v}^{\beta}.
\end{eqnarray}

\begin{definition}
A pair $ (\alpha, \beta)$ is called $ (\Lambda,j)$-resonant if $ \lambda_{j}-\Lambda\cdot \alpha + \Lambda\cdot \beta = 0$. The resonant part
of the polynomial vector-field \eqref{23*} is another polynomial vector-field $P^{\text{res}}(v)$ such that
$$
P_j^{\text{res}}(v)
= \sum_{\substack{\text{pair} \ (\alpha,\beta)\  \text{is}\
(\Lambda,j)\text{-resonant},\\ 0 \leq |\alpha|,|\beta|\leq N}}C^{\alpha\beta}_{j}v^{\alpha} \bar{v}^{\beta}\quad\text{for}\quad 1\le j\le n.
$$
\end{definition}

\begin{example}\label{ex2}
For  equation \eqref{s1} in Example \ref{ex1},\ we have
$
P(v) = v^2\bar v +v^3 =: P_1(v) +P_2(v).
$
The monomial $P_1$ is resonant since
$
\omega -\omega \cdot2 +\omega=0.
$
So now $P^{\text{res}}(v) = P_1(v)$.
\end{example}
\medskip

Note that
\begin{eqnarray*}
&&\frac{1}{|T|} \int_{0}^{T} e^{-\mathbf{i}t(\lambda_{j}-\Lambda\cdot \alpha + \Lambda\cdot \beta)}\mathrm{d} t\\
\nonumber&&\qquad= \begin{cases} 1, &\ \ \ \text{if}\ (\alpha,\beta)\ \text{is}\ (\Lambda,j)\text{-resonant},\\
\frac{1}{-\mathbf{i}T(\lambda_{j}-\Lambda\cdot \alpha + \Lambda\cdot \beta)}(e^{-\mathbf{i}T(\lambda_{j}-\Lambda\cdot \alpha + \Lambda\cdot \beta)}-1),  &\ \ \ \text{otherwise}.
\end{cases}
\end{eqnarray*}
So,\
\begin{eqnarray}\label{25}
\lim_{T\rightarrow \pm\infty} \frac{1}{|T|}\int_{0}^{T} e^{-\mathbf{i}t(\lambda_{j}-\Lambda\cdot \alpha + \Lambda\cdot \beta)}\mathrm{d} t
= \begin{cases} 1, &\ \ \ \ \ \text{if}\ (\alpha,\beta)\ \text{is}\ (\Lambda,j)\text{-resonant},\\
0,  &\ \ \ \ \ \text{otherwise}.
\end{cases}
\end{eqnarray}
Thus, we have
\begin{eqnarray}\label{25*}
 \langle\langle P\rangle\rangle^{T}_{j}(v) \xrightarrow[T\rightarrow \pm \infty]{}
   P^{\text{res}}_{j}(v).
\end{eqnarray}
Therefore,\ in the polynomial case  the limit in (\ref{19}) exists.
\begin{lemma}\label{a5}
If $ P \in \text{Lip}_{\mathcal{X}}(\mathbb{C}^{n},\mathbb{C}^{n})$ is a polynomial vector-field of the form (\ref{23*}),
then the limit $ \langle\langle P\rangle\rangle $ in
 (\ref{19}) exists for all $ a\in\mathbb{C}^n$, equals to the resonant part $P^{res}$ of $ P$
and satisfies  $ \langle\langle P\rangle\rangle  \in \text{Lip}_{\mathcal{X}}(\mathbb{C}^{n},\mathbb{C}^{n})$.\
 Moreover,\ if $ a \in \bar B_{R}$,\ then the rate of convergence (\ref{19}) depends only on $ R, \Lambda$ and $ P$.
\end{lemma}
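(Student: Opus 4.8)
The plan is to assemble the pieces already produced in the computation preceding the statement, so that the proof reduces to elementary estimates on finite sums. The existence of the limit and the identification $\langle\langle P\rangle\rangle = P^{\text{res}}$ follow directly from (25) and its consequence (25*): for each index $j$ and each monomial indexed by $(\alpha,\beta)$, the scalar prefactor $\frac{1}{|T|}\int_{0}^{T} e^{\mathbf{i}t(\lambda_{j}-\Lambda\cdot\alpha+\Lambda\cdot\beta)}\,\mathrm{d}t$ tends to $1$ on $(\Lambda,j)$-resonant pairs and to $0$ otherwise. Since each $P_{j}$ is a finite sum of such monomials, summing the individual limits yields $\langle\langle P\rangle\rangle_{j}(v) = P^{\text{res}}_{j}(v)$ for every $v \in \mathbb{C}^{n}$.

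For the regularity claim $\langle\langle P\rangle\rangle \in \text{Lip}_{\mathcal{X}}(\mathbb{C}^{n},\mathbb{C}^{n})$, I would pass to the limit in the uniform bounds of Lemma~\ref{a3}. That lemma gives, for every $T\ne0$ and all $a,a_{1},a_{2}\in\bar{B}_{R}$, the inequalities $|\langle\langle P\rangle\rangle^{T}(a)| \leq \mathcal{X}(R)$ and $|\langle\langle P\rangle\rangle^{T}(a_{2})-\langle\langle P\rangle\rangle^{T}(a_{1})| \leq \mathcal{X}(R)|a_{2}-a_{1}|$, with the \emph{same} modulus $\mathcal{X}$ for all $T$. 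Since $\langle\langle P\rangle\rangle^{T}(a)\to\langle\langle P\rangle\rangle(a)$ pointwise as $T\to\pm\infty$, both inequalities are inherited by the limit, so $\langle\langle P\rangle\rangle$ satisfies the same sup-bound and Lipschitz constant on every ball, as required.

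The one substantive step is the uniform rate of convergence. From the explicit formula for $\langle\langle P\rangle\rangle^{T}_{j}$ together with (25), the difference $\langle\langle P\rangle\rangle^{T}_{j}(v)-P^{\text{res}}_{j}(v)$ is a finite sum over non-resonant pairs $(\alpha,\beta)$ of terms $C^{\alpha\beta}_{j}\,\frac{e^{-\mathbf{i}T(\lambda_{j}-\Lambda\cdot\alpha+\Lambda\cdot\beta)}-1}{-\mathbf{i}T(\lambda_{j}-\Lambda\cdot\alpha+\Lambda\cdot\beta)}\,v^{\alpha}\bar{v}^{\beta}$. For $v\in\bar{B}_{R}$ one has $|v^{\alpha}\bar{v}^{\beta}|\leq R^{|\alpha|+|\beta|}$, while each prefactor is bounded by $\frac{2}{|T|\,\delta}$, where $\delta:=\min|\lambda_{j}-\Lambda\cdot\alpha+\Lambda\cdot\beta|>0$ is the minimum over the absolute values of the finitely many non-resonant frequencies. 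Summing the finitely many terms then yields a bound of the form $C(R,\Lambda,P)/|T|$ for the total difference, which depends only on $R$, $\Lambda$ and $P$.

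The only point requiring care — and it is mild in the polynomial setting — is that $\delta>0$; but this is automatic, since the index set $\{(\alpha,\beta):|\alpha|,|\beta|\leq N\}$ is finite and every non-resonant frequency is nonzero by definition. Thus the polynomial structure reduces each assertion to bounding a finite sum, and no genuine obstacle arises. I expect the general Lipschitz case in Lemma~\ref{a4} to be where the real work lies, since there the field need not decompose into finitely many monomials and one must instead approximate $P$ and control the averaging of the remainder uniformly in $T$.
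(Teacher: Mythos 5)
Your proposal is correct and takes essentially the same approach as the paper: existence and the identification $\langle\langle P\rangle\rangle = P^{\mathrm{res}}$ from (\ref{25}) and (\ref{25*}), the inclusion $\langle\langle P\rangle\rangle \in \text{Lip}_{\mathcal{X}}(\mathbb{C}^{n},\mathbb{C}^{n})$ by passing the $T$-uniform bounds of Lemma~\ref{a3} to the pointwise limit, and the uniform rate of convergence from the behaviour of the non-resonant oscillatory prefactors. You simply spell out the $2/(|T|\delta)$ estimate (with $\delta>0$ the minimal non-resonant divisor, positive by finiteness) whose details the paper explicitly omits.
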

\begin{proof}
The existence of the limit $ \langle\langle P\rangle\rangle $ already  is  proved, and its Lischitz continuity easily follows Lemma \ref{a3}.\
The second assertion holds since the rate of convergence in (\ref{25}) and (\ref{25*}) depends only on the indicated quantities.\ We omit the details.
\end{proof}

Now we begin to prove  Lemma \ref{a4}.
\begin{proof}
To show that the limit exists we have to verify that $ \frac{1}{|T|}\int_{0}^{T}e^{\textbf{i}\lambda_j t}\circ P_j(\Phi_{-\Lambda t}a)\mathrm{d} t$
converges to a limit as $T \rightarrow \pm\infty$.\ It suffices to show that for any $\xi>0$, there exists $T'=T'(\xi,{R},\Lambda,P)>0$,\ such that
\begin{equation}\label{minus}
  |\langle\langle P \rangle\rangle^{ T_1}_{j}(a)-\langle\langle P \rangle\rangle^{T_2}_{j}(a)|\leq \xi,\ \ \ \ \text{if }\  |T_1|, |T_2| \geq T'.
\end{equation}
For any $j$
consider the restriction of
 $P_j$ to the  closed ball $\bar{B}_{R}$. By the Stone-Weierstrass theorem, there exist $N$ and a polynomial $P_{j}^{N}(a)$ of degree $N$,\ depending only on $R$ and $ P$,\ such that
\begin{equation*}
  |P_j(a)-P_j^{N}(a)|\leq \frac{\xi}{4},\ \ \  \forall a\in\bar{B}_{R}.
\end{equation*}
We have got a polynomial vector field $P^N$, for which the  assertions of  Lemma \ref{a4} already are proved.

Since $\Phi_{-\Lambda t}a\in \bar{B}_{R}$ for any $t$, then
\begin{equation*}
  |y_j^{t}(a;P_j)-y_j^{t}(a;P_j^{N})|\leq \frac{\xi}{4},\ \ \ \forall t,\ \forall a\in \bar{B}_{R}.
\end{equation*}
So,
\begin{equation}\label{ppn}
  |\langle\langle P \rangle\rangle_{j}^{T}(a)-\langle\langle P^N \rangle\rangle_{j}^{T}(a)|\leq \frac{\xi}{4},\ \ \ \ \forall T\ne0.
\end{equation}
By Lemma \ref{a5}, there exists $T'=T'_N>0$ such that
\begin{equation}\label{pnt}
  |\langle\langle P^{N} \rangle\rangle_{j}^{\tilde{T}}(a)-\langle\langle P^N \rangle\rangle_{j}(a)|\leq \frac{\xi}{4},\ \ \ \ \forall |\tilde{T} |\geq T'.
\end{equation}
From (\ref{ppn}) and (\ref{pnt}),
\begin{equation*}
  |\langle\langle P \rangle\rangle_{j}^{T_1}(a)-\langle\langle P^N \rangle\rangle_{j}(a)|\leq \frac{\xi}{2},\ \ \ \ \forall |{T}_1|\geq T'.
\end{equation*}
The same is true for $T_2$. Therefore (\ref{minus}) follows, and the convergence (\ref{19}) is established. The inclusion
 $\langle\langle P\rangle\rangle\in \text{Lip}_{\mathcal{X}}(\C^n, \C^n)$  is a consequence of Lemma \ref{a3}, while the
   last assertion of the lemma directly follows from the proof. The lemma is proved.
\end{proof}

\begin{example}
Let $\lambda_j>0$ for all $j$ and $P_j$ be an anti-holomorphic polynomial $P_j(v)=\sum_{|\beta|\leq N}C_j^{\beta}\bar{v}^{\beta}$.
Then no pair $(0,\beta)$ is $(\Lambda,j)$-resonant and  $\langle\langle P\rangle\rangle=0$.
\end{example}

\subsection{Properties of the operator  $\langle\langle\cdot\rangle\rangle$}
\begin{proposition}\label{p}
Let $P$ and $Q$ be locally Lipschitz vector-fields on $\mathbb{C}^n$. Then
\begin{itemize}
  \item[1)] (linearity): $\langle\langle aP+bQ \rangle\rangle=a\langle\langle P\rangle\rangle+b\langle\langle Q \rangle\rangle$ for any $a,b\in\mathbb{R}$.
  \item[2)] If  $P=\text{diag}(a_1,\dots,a_n)$, $a_j\in\mathbb{C}$, then $\langle\langle P\rangle\rangle=P$.
  \item[3)] The mapping $v\mapsto\langle\langle P\rangle\rangle(v)$ commutes with all operators $\Phi_{\Lambda\theta},\ \theta\in\mathbb{R}$.
  \item[4)] The mapping $(\mathbb{C}^n\times(\mathbb{R}\setminus \{0\})^n)\ni (v,\Lambda)\mapsto \langle\langle P\rangle\rangle_{\Lambda}(v)$ is
  measurable. So for every $v$ the averaging $\langle\langle P\rangle\rangle_{\Lambda}(v)$ is a measurable function of $\Lambda$.
\end{itemize}
\end{proposition}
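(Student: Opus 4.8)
The plan is to handle the four assertions separately, since they are essentially independent. Parts 1) and 2) I would derive straight from the definition \eqref{19}. Linearity is immediate from the linearity of each rotation $\Phi_{\Lambda t}$: the integrand associated with $aP+bQ$ is $a\,y^{t}(\cdot\,;P)+b\,y^{t}(\cdot\,;Q)$, so the partial averages \eqref{21} combine linearly, and since the limits for $P$ and for $Q$ exist by Lemma~\ref{a4}, so does the limit for $aP+bQ$, with the stated value. For 2), writing $P(v)=(a_1v_1,\dots,a_nv_n)$ one computes $y^{t}_{j}(v)=e^{\mathbf{i}\lambda_j t}\,a_j\,e^{-\mathbf{i}\lambda_j t}v_j=a_jv_j$, which does not depend on $t$; hence every partial average, and thus the limit, equals $P$. (Equivalently, via Lemma~\ref{a5}, the monomial $a_jv_j$ corresponds to the multi-indices $\alpha=e_j$, $\beta=0$, and then $\lambda_j-\Lambda\cdot\alpha+\Lambda\cdot\beta=0$, so it is $(\Lambda,j)$-resonant for every $\Lambda$.)

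For 3) I would use the group law $\Phi_{w_1}\circ\Phi_{w_2}=\Phi_{w_1+w_2}$. Beginning with the partial average at $\Phi_{\Lambda\theta}v$ and substituting $s=t-\theta$, the argument of $P$ becomes $\Phi_{-\Lambda t}\Phi_{\Lambda\theta}v=\Phi_{-\Lambda s}v$, while the outer factor becomes $\Phi_{\Lambda t}=\Phi_{\Lambda\theta}\Phi_{\Lambda s}$, so that
$$
\langle\langle P\rangle\rangle^{T}(\Phi_{\Lambda\theta}v)=\Phi_{\Lambda\theta}\,\frac{1}{|T|}\int_{-\theta}^{T-\theta}y^{s}(v)\,\mathrm{d}s .
$$
The interval $[-\theta,T-\theta]$ differs from $[0,T]$ only by two pieces of length $|\theta|$, on which the integrand is bounded by $\mathcal{X}(R)$ according to \eqref{22}; after division by $|T|$ these contributions are $O(|T|^{-1})$ and disappear in the limit $T\to\pm\infty$. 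This yields $\langle\langle P\rangle\rangle(\Phi_{\Lambda\theta}v)=\Phi_{\Lambda\theta}\langle\langle P\rangle\rangle(v)$, the asserted commutation.

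Part 4) is the one I expect to require the most care, and my plan is to obtain it as a pointwise limit of continuous functions. The first step is to show that for each fixed $T$ the map $(v,\Lambda)\mapsto\langle\langle P\rangle\rangle^{T}_{\Lambda}(v)$ is jointly continuous: the integrand $\Phi_{\Lambda t}\circ P(\Phi_{-\Lambda t}v)$ is continuous in $(v,\Lambda,t)$, since its entries involve $e^{\pm\mathbf{i}\lambda_j t}$ and the continuous field $P$, and it is uniformly bounded for $(v,\Lambda)$ in compact sets, so continuity is preserved under integration over $t\in[0,T]$. By Lemma~\ref{a4} the limit $\langle\langle P\rangle\rangle_{\Lambda}(v)=\lim_{m\to\infty}\langle\langle P\rangle\rangle^{m}_{\Lambda}(v)$, $m\in\mathbb{N}$, exists at every $(v,\Lambda)\in\mathbb{C}^n\times(\mathbb{R}\setminus\{0\})^n$, and a pointwise limit of continuous (hence Borel) functions is Borel measurable, giving the joint measurability. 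The concluding statement --- measurability of $\Lambda\mapsto\langle\langle P\rangle\rangle_{\Lambda}(v)$ for each fixed $v$ --- then follows from the standard fact that every section of a jointly Borel function is Borel. The main obstacle is precisely the joint continuity in $(v,\Lambda)$, because $\Lambda$ enters nonlinearly through the rotations $\Phi_{\pm\Lambda t}$; the uniform bound of Lemma~\ref{a3} is what makes this step routine.
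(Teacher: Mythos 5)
Your proposal is correct and follows essentially the same route as the paper: parts 1) and 2) directly from the definition \eqref{19}, part 3) by the group law $\Phi_{w_1}\circ\Phi_{w_2}=\Phi_{w_1+w_2}$ and the shift $s=t-\theta$ with the $O(|T|^{-1})$ boundary pieces, and part 4) by writing $\langle\langle P\rangle\rangle_{\Lambda}(v)$ as a pointwise limit of the continuous (hence Borel) partial averages $\langle\langle P\rangle\rangle^{T}_{\Lambda}(v)$. Your write-up is in fact slightly more detailed than the paper's (which declares 1) and 2) obvious and leaves the boundary-term estimate and the joint continuity implicit), but the ideas coincide.
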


\begin{proof}
Properties 1) and 2) are obvious. Let us prove 3), assuming for definiteness that $T>0$.
We have:
\begin{align*}
  \langle\langle P\rangle\rangle(\Phi_{\Lambda\theta}a)&= \lim_{T\rightarrow + \infty}\frac{1}{T}\int_0^T \Phi_{\Lambda t}\circ P(\Phi_{-\Lambda t}\Phi_{\Lambda\theta}a)\text{d}t \\
 &=\Phi_{\Lambda\theta}\lim_{T\rightarrow+ \infty}\frac{1}{T}\int_0^T \Phi_{\Lambda (t-\theta)}\circ P(\Phi_{-\Lambda (t-\theta)}a) \text{d}t\\
 &=\Phi_{\Lambda\theta}\lim_{T\rightarrow+\infty}\frac{1}{T}\int_{-\theta}^{T-\theta} \Phi_{\Lambda t'}\circ P(\Phi_{-\Lambda t'}a) \text{d}t'\\
 &=\Phi_{\Lambda\theta}\lim_{T\rightarrow+\infty}\frac{1}{T}\left(\int_0^{T}+\int_{-\theta}^0-\int_{T-\theta}^T \right)\Phi_{\Lambda t'}\circ P(\Phi_{-\Lambda t'}a) \text{d}t'\\
 &=\Phi_{\Lambda\theta}\langle\langle P\rangle\rangle(a).
\end{align*}

To prove 4), we note that for $T>0$ the mappings $(a,\Lambda)\mapsto \langle\langle P\rangle\rangle_{\Lambda}^{T}(a)$ are continuous, so measurable. By Lemma \ref{a4}, the mapping in question is a point-wise limit, as $T\rightarrow \infty$, of the measurable mappings above; so it also is measurable (see \cite{R}, Theorem~1.14).
\end{proof}

\section{Averaging for solutions of  equation (\ref{maineq**})} \label{s_avr}
In this section we get our main result, describing the behaviour, as $\epsilon\rightarrow 0$, of solutions of equation  (\ref{maineq**}) on time-intervals $|\tau|\leq \text{const}$, where {\it const} does not depend on $\epsilon$. In view of (\ref{maineq**}), this also describes the
behaviour of the amplitudes $|v_j(\tau)|$ of solutions  (\ref{10}), and accordingly, the behaviour of the amplitudes of solutions for (\ref{2}) on long time-intervals $|t|\leq \text{const}\  \epsilon^{-1}$.

Let in eq. (\ref{10}) $ P \in \text{Lip}_{\mathcal{X}}(\mathbb{C}^{n},\mathbb{C}^{n})$ for some function $\mathcal{X}$ as in Definition \ref{8}, and let $v(\tau)$ be its solution such that  $v(0)=v_0$. Denote $|v_0|=R$.
Then by Lemma \ref{a2}, $v(\tau)\in\bar{B}_{2R}$ for $|\tau|\leq \theta=\frac{R}{\mathcal{X}(2R)}$.  For $|\tau|\leq \theta$ the curve
$a^{\epsilon}(\tau)=\Phi_{\tau\epsilon^{-1}\Lambda}v(\tau)$ satisfies \eqref{maineq**},  \eqref{17} and
 $ |a^{\epsilon}_j(\tau)| = |v_j(\tau)|$  for each $j$. So for $ |\tau|\leq \theta$ we have:
\begin{equation}\label{ae}
  \left\{
     \begin{array}{c}
      |a^{\epsilon}(\tau)|\leq 2R,\ \ \ \   \\
          |\frac{\partial a^{\epsilon}}{\partial \tau}| \leq |P(\Phi_{-\tau\epsilon^{-1}\Lambda}a(\tau))|\leq \mathcal{X}(2R).
     \end{array}
   \right.
\end{equation}
Consider the collection of curves $a^{\epsilon}$ (the solutions of equation (\ref{maineq**})),
\begin{equation*}
  a^{\epsilon}\in C([-\theta,\theta],\mathbb{C}^n), \quad \epsilon \in (0,1].
\end{equation*}
By (\ref{ae}) and the Arzela-Ascoli theorem, the family $\{a^{\epsilon}, 0<\epsilon\leq 1\}$ is precompact in $C([-\theta,\theta],\mathbb{C}^n)$. So there exists a sequence
$ \epsilon_j\rightarrow 0$, such that
\begin{equation}\label{compact}
  a^{\epsilon_j}\rightarrow a^0 \text{ in }C([-\theta,\theta],\mathbb{C}^n),\ \ \ \ \text{ as }\epsilon_j\rightarrow 0,
\end{equation}
for some curve $a^0\in C([-\theta,\theta],\mathbb{C}^n)$.
By (\ref{ae}),
\begin{equation*}
  |a^{\epsilon}(\tau_1)-a^{\epsilon}(\tau_2)|\leq \mathcal{X}(2R)|\tau_1-\tau_2|.
\end{equation*}
Passing in this relation to the limit as $\epsilon_j\rightarrow 0$, we obtain
\begin{equation}\label{a0}
  |a^{0}(\tau_1)-a^{0}(\tau_2)|\leq \mathcal{X}(2R)|\tau_1-\tau_2|,\ \ \ \  \forall \tau_1,\tau_2\in[-\theta,\theta].
\end{equation}

Now we address the following problem: does the limit $a^0$ depend on $\{\epsilon_j\}$? If it does not, then how to describe it?

A solution $a^{\epsilon}(\tau)$ of (\ref{maineq**}) satisfies the relation
\begin{equation}\label{aet}
  a^{\epsilon}(\tau)=v_0+\int_0^{\tau}\Phi_{s\epsilon^{-1}\Lambda}\circ P(\Phi_{-s\epsilon^{-1}\Lambda}a^{\epsilon}(s))\text{d}s,\ \ \ \ \forall |\tau|\leq \theta,
\end{equation}
and the estimates \eqref{ae}. From Lemma \ref{a4},
\begin{equation}\label{fromlemma}
  \frac{1}{|T|}\int_0^T \Phi_{t\epsilon^{-1}\Lambda}\circ P(\Phi_{-t\epsilon^{-1}\Lambda}a^{\epsilon}(t))\text{d}t=
  \langle\langle P\rangle\rangle(a)+o(1),\ \ \text{as } T\rightarrow \pm\infty,
\end{equation}
where $o(1)$ does not depend on $v_0$ if $v_0\in \bar B_R$.

Consider the following  effective equation
\begin{equation}\label{effective}
   a(\tau)=v_0+\int_0^{\tau}\langle\langle P\rangle\rangle(a(s))\text{d}s,
\end{equation}
that is
$$
\frac{\partial}{\partial \tau}
{a}(\tau)=\langle\langle P\rangle\rangle(a(\tau)),\qquad a(0)=v_0.
 $$
Since $\langle\langle P\rangle\rangle$ is locally Lipschitz, then a solution for (\ref{effective}) is unique and exists at least for small $|\tau|$.

\begin{lemma}\label{solutiona0}
The curve
$a^0(\tau)$ is a solution of (\ref{effective}) for $ |\tau|\leq \theta$.
\end{lemma}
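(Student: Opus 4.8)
\emph{The plan} is to pass to the limit $\epsilon_j\to0$ in the integral identity (\ref{aet}) written for $a^{\epsilon_j}$. Since $a^{\epsilon_j}\to a^0$ uniformly on $[-\theta,\theta]$ by (\ref{compact}), the left-hand side of (\ref{aet}) tends to $a^0(\tau)$, so it suffices to show that the oscillatory integral $\int_0^\tau \Phi_{s\epsilon_j^{-1}\Lambda}\circ P(\Phi_{-s\epsilon_j^{-1}\Lambda}a^{\epsilon_j}(s))\,\mathrm{d}s=\int_0^\tau y^{s\epsilon_j^{-1}}(a^{\epsilon_j}(s))\,\mathrm{d}s$ converges to $\int_0^\tau \langle\langle P\rangle\rangle(a^0(s))\,\mathrm{d}s$. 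I treat $\tau\in(0,\theta]$; the case $\tau<0$ is symmetric and $\tau=0$ is trivial. The heart of the argument is the classical two-scale Krylov--Bogolyubov splitting: partition $[0,\tau]$ into $L$ equal subintervals $[\tau_l,\tau_{l+1}]$ of length $\Delta=\tau/L$ with $\tau_l=l\Delta$, and take the two limits $\epsilon_j\to0$ (for fixed $L$) and then $L\to\infty$, \emph{in this order}.

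On each subinterval I perform three replacements. First, using the uniform Lipschitz bound $|a^{\epsilon}(s)-a^{\epsilon}(\tau_l)|\le \mathcal{X}(2R)\Delta$ from (\ref{ae}) together with the Lipschitz estimate of Lemma \ref{a3} (the fields $y^t$ are $\mathcal{X}(2R)$-Lipschitz on $\bar B_{2R}$ uniformly in $t$), I freeze the argument, replacing $y^{s\epsilon^{-1}}(a^{\epsilon}(s))$ by $y^{s\epsilon^{-1}}(a^{\epsilon}(\tau_l))$ with total error over $[0,\tau]$ at most $\mathcal{X}(2R)^2\Delta\tau=O(\Delta)$, uniformly in $\epsilon$. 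Second comes the averaging step: the change of variables $t=s\epsilon^{-1}$ gives the exact identity
\[
\int_{\tau_l}^{\tau_{l+1}} y^{s\epsilon^{-1}}(a)\,\mathrm{d}s = \tau_{l+1}\langle\langle P\rangle\rangle^{\tau_{l+1}\epsilon^{-1}}(a) - \tau_l\langle\langle P\rangle\rangle^{\tau_l\epsilon^{-1}}(a),
\]
so that, as $\epsilon_j\to0$ and hence $\tau_l\epsilon_j^{-1},\tau_{l+1}\epsilon_j^{-1}\to+\infty$, Lemma \ref{a4} forces the right-hand side to tend to $(\tau_{l+1}-\tau_l)\langle\langle P\rangle\rangle(a)=\Delta\langle\langle P\rangle\rangle(a)$. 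Since the convergence rate in (\ref{19}) depends only on $R,\Lambda,P$ and each $a^{\epsilon}(\tau_l)\in\bar B_{2R}$, this holds uniformly in $l$, so for fixed $L$ the summed averaging error tends to $0$ as $\epsilon_j\to0$. Third, I use $a^{\epsilon_j}(\tau_l)\to a^0(\tau_l)$ and the Lipschitz continuity of $\langle\langle P\rangle\rangle$ to pass $\Delta\langle\langle P\rangle\rangle(a^{\epsilon_j}(\tau_l))\to\Delta\langle\langle P\rangle\rangle(a^0(\tau_l))$, and recognize $\sum_l\Delta\langle\langle P\rangle\rangle(a^0(\tau_l))$ as a Riemann sum for $\int_0^\tau\langle\langle P\rangle\rangle(a^0(s))\,\mathrm{d}s$, with error again $O(\Delta)$ by the regularity (\ref{a0}) of $a^0$.

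Collecting the three bounds, for each fixed $L$ I obtain
\[
\limsup_{j\to\infty}\left| \int_0^\tau y^{s\epsilon_j^{-1}}(a^{\epsilon_j}(s))\,\mathrm{d}s - \int_0^\tau \langle\langle P\rangle\rangle(a^0(s))\,\mathrm{d}s \right| \le C\Delta = C\tau/L,
\]
where $C$ depends only on $R$ and $\mathcal{X}$; letting $L\to\infty$ kills the right-hand side and establishes the convergence of the oscillatory integral, hence (\ref{effective}) for $a^0$. \emph{The main obstacle} is precisely the averaging step: one must send $\epsilon_j\to0$ \emph{before} refining the partition, because the per-interval error is controlled only when $\Delta\epsilon^{-1}$ is large, and one must invoke the uniformity of the convergence rate in Lemma \ref{a4} to handle all $L$ subintervals at once — their base points $a^{\epsilon_j}(\tau_l)$ vary with $\epsilon_j$ but remain in the fixed ball $\bar B_{2R}$.
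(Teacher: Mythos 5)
Your proof is correct, and while it follows the same overall Krylov--Bogolyubov strategy as the paper (partition the time interval, freeze the argument of the oscillatory integrand on each piece, then average), its execution differs from the paper's in two substantive ways. The paper proves a stronger quantitative statement (Lemma \ref{h}): it uses an $\epsilon$-dependent mesh $L=\sqrt{\epsilon}$, so that a \emph{single} limit $\epsilon\to 0$ suffices, and on each interval $[b_{j-1},b_j]$ it reduces the oscillatory integral to a partial average anchored at time $0$ by shifting the origin with the group property of $\Phi$ and then invoking the commutation of $\langle\langle\cdot\rangle\rangle$ with the rotations $\Phi_{\Lambda\theta}$ (Proposition \ref{p}, item 3); this yields an explicit modulus $\kappa(\epsilon)\to0$, uniform in $\tau\in[0,\theta]$ and in $v_0\in\bar B_R$. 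You instead fix the number $L$ of subintervals and take an iterated limit (first $\epsilon_j\to0$, then $L\to\infty$), and you handle the per-interval oscillatory integral by the exact identity
\begin{equation*}
\int_{\tau_l}^{\tau_{l+1}}y^{s\epsilon^{-1}}(a)\,\mathrm{d}s
=\tau_{l+1}\langle\langle P\rangle\rangle^{\tau_{l+1}\epsilon^{-1}}(a)
-\tau_l\langle\langle P\rangle\rangle^{\tau_l\epsilon^{-1}}(a),
\end{equation*}
i.e.\ as a difference of two partial averages both anchored at time $0$ and evaluated at the same frozen point. This neatly sidesteps both the origin-shifting computation and Proposition \ref{p}.3 altogether; the only nontrivial input is the uniform-over-$\bar B_{2R}$ convergence rate of Lemma \ref{a4}, applied twice per interval. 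Your bookkeeping is also sound on the one delicate point: the per-interval averaging error carries the factor $\tau_l+\tau_{l+1}$ rather than $\Delta$, so it is controlled only for fixed $L$ as $\epsilon_j\to0$, which is exactly why your order of limits is forced and why you cannot extract a rate. That is the trade-off between the two arguments: yours is more elementary and self-contained, needing fewer properties of the averaging operator; the paper's, at the cost of the rotation-invariance lemma, produces the uniform, subsequence-free estimate of Lemma \ref{h} with an explicit $\kappa(\epsilon)$, which is the form of the statement that carries over to the stochastic and PDE extensions mentioned in the introduction. For the lemma as stated, both arguments are complete.
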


To prove the lemma we first perform some additional constructions.

Assume for definiteness that $\tau\ge 0$, i.e. $0\le\tau\le \theta$,
and consider an intermediate scale $L=\sqrt{\epsilon}$; then $\epsilon\ll L\ll 1$.
Denote $N=[\frac{\theta}{L}]$. Let $b_j=jL,0\leq j\leq N$, $b_{N+1}=\theta$ and $\triangle_j=[b_{j-1},b_j], 1\leq j\leq N+1$. Then $|\triangle_1|=\cdots=|\triangle_N|=L$, $0\leq |\triangle_{N+1}| < L$.

Let the curves $y_{\epsilon}^t(a), t\in\mathbb{R}$, be defined as in (\ref{20}) with $ a= a^{\epsilon}$.
\begin{lemma}\label{h}
For any $0\leq  |\tau| \leq \theta$,\ denote
$\
  I(\tau)=\int_0^{\tau}G(a^{\epsilon}(s),s\epsilon^{-1})\text{d}s,
$
where $G(a^{\epsilon}(s),s\epsilon^{-1})=y^{s\epsilon^{-1}}(a^{\epsilon}(s))-\langle\langle P\rangle\rangle(a^{\epsilon}(s))$. Then uniformly in $\tau\in[0,\theta]$ we have $|I(\tau)|\leq \kappa
(\epsilon)$, where $\kappa(\epsilon)\rightarrow 0$ as $\epsilon\rightarrow 0$, does not depend on $v_0$ if $|v_0|\le R$.
\end{lemma}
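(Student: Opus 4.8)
The plan is to exploit the two-scale structure already set up: on the slow scale $a^\epsilon$ varies by at most $\mathcal X(2R)L$ across each interval $\triangle_j$ of length $L=\sqrt\epsilon$, while under the rescaling $t=s\epsilon^{-1}$ the fast time sweeps an interval of length $\mathcal T:=L\epsilon^{-1}=\epsilon^{-1/2}\to\infty$, long enough for the ergodic average of $y^t$ to settle near $\langle\langle P\rangle\rangle$. Accordingly I would write $I(\tau)=\sum_j\int_{\triangle_j\cap[0,\tau]}G(a^\epsilon(s),s\epsilon^{-1})\,ds$ and, on each $\triangle_j$, replace the slow argument $a^\epsilon(s)$ by the frozen value $a_j:=a^\epsilon(b_{j-1})$, estimating separately the freezing error and the frozen integral.

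For the freezing error, note that for each fixed $t$ both $a\mapsto y^t(a)$ and $a\mapsto\langle\langle P\rangle\rangle(a)$ are Lipschitz on $\bar B_{2R}$ with constant $\mathcal X(2R)$ (Lemmas \ref{a3} and \ref{a4}), so $G(\cdot,s\epsilon^{-1})$ has Lipschitz constant $2\mathcal X(2R)$; combined with $|a^\epsilon(s)-a_j|\le\mathcal X(2R)L$ from \eqref{a0}, this gives a contribution $O(\mathcal X(2R)^2L^2)$ per interval, hence $O(\mathcal X(2R)^2\sqrt\epsilon)$ after summing the $N\le\theta/L$ intervals.

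The heart of the matter is the frozen integral $\int_{\triangle_j}G(a_j,s\epsilon^{-1})\,ds$. Rescaling $t=s\epsilon^{-1}$ turns it into $L\bigl(M_j-\langle\langle P\rangle\rangle(a_j)\bigr)$, where $M_j:=\frac1{\mathcal T}\int_{S_{j-1}}^{S_j}y^t(a_j)\,dt$ and $S_{j-1}=b_{j-1}\epsilon^{-1}$. The naive estimate, rewriting $M_j$ through the two averages-from-zero $\langle\langle P\rangle\rangle^{S_j}$ and $\langle\langle P\rangle\rangle^{S_{j-1}}$, produces a prefactor $S_j/\mathcal T=j$ that grows with $j$ and defeats the summation---this is the main obstacle. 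I would circumvent it with the covariance identity $y^{c+r}(a)=\Phi_{\Lambda c}\,y^r(\Phi_{-\Lambda c}a)$ (immediate from $\Phi_{w_1}\Phi_{w_2}=\Phi_{w_1+w_2}$), which with $c=S_{j-1}$ gives $M_j=\Phi_{\Lambda c}\langle\langle P\rangle\rangle^{\mathcal T}(\Phi_{-\Lambda c}a_j)$. Invoking the commutation property $\Phi_{\Lambda c}\langle\langle P\rangle\rangle(\Phi_{-\Lambda c}a_j)=\langle\langle P\rangle\rangle(a_j)$ (Proposition~\ref{p}, part~3) together with the unitarity of $\Phi_{\Lambda c}$, the quantity $|M_j-\langle\langle P\rangle\rangle(a_j)|$ equals $|\langle\langle P\rangle\rangle^{\mathcal T}(\Phi_{-\Lambda c}a_j)-\langle\langle P\rangle\rangle(\Phi_{-\Lambda c}a_j)|$, which by the uniform rate of convergence of Lemma \ref{a4} on $\bar B_{2R}$ is bounded by some $\rho(\mathcal T)\to0$ depending only on $R,\Lambda,P$ and, crucially, independent of $j$. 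Hence each frozen integral is $\le L\rho(\mathcal T)$, and summation over the $N\le\theta/L$ intervals yields $\le\theta\rho(\epsilon^{-1/2})\to0$.

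Finally I would add the leftover piece $\triangle_{N+1}$, whose length is $<L$ and whose integrand is bounded by $2\mathcal X(2R)$ (from \eqref{22} and $|\langle\langle P\rangle\rangle|\le\mathcal X(2R)$), contributing $O(\sqrt\epsilon)$, and note that for a general $\tau\in[0,\theta]$ the same bounds apply to the at most $N+1$ (possibly truncated) intervals covering $[0,\tau]$, so all estimates are uniform in $\tau$. Collecting the three pieces gives $|I(\tau)|\le\kappa(\epsilon):=\theta\rho(\epsilon^{-1/2})+2\theta\mathcal X(2R)^2\sqrt\epsilon+2\mathcal X(2R)\sqrt\epsilon$, which tends to $0$ and depends only on $R,\Lambda,P$, as required; the case $\tau\le0$ is handled symmetrically.
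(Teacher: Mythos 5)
Your proposal is correct and follows essentially the same route as the paper: the same partition into intervals of length $L=\sqrt\epsilon$, the same freezing estimate via the common Lipschitz constant $\mathcal X(2R)$, and the same treatment of the frozen integral, where your ``covariance identity'' $y^{c+r}(a)=\Phi_{\Lambda c}\,y^r(\Phi_{-\Lambda c}a)$ combined with Proposition~\ref{p}(3), unitarity of $\Phi_{\Lambda c}$, and the uniform convergence rate of Lemma~\ref{a4} on $\bar B_{2R}$ is exactly the paper's shift-and-rescale manipulation yielding $I_j^2=L\cdot o(1)$ uniformly in $j$. The only nitpick is that the bound $|a^\epsilon(s)-a^\epsilon(b_{j-1})|\le\mathcal X(2R)L$ should be cited from \eqref{ae} (the derivative bound for $a^\epsilon$), not \eqref{a0}, which concerns the limit curve $a^0$.
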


\begin{proof}
Denote
\begin{equation*}
  I_j(\tau)=\int_{\triangle_j}G(a^{\epsilon}(s),s\epsilon^{-1})\text{d}s,\ \ 1\leq j\leq N+1.
\end{equation*}
Then $|I|\leq\sum_{j=1}^{N+1}|I_j|$. The term $I_{N+1}$ is trivially small.\ Now consider $I_j$ with $1\leq j\leq N$. We have
\begin{align*}
 |I_j|\leq\left|\int_{b_{j-1}}^{b_j} (G(a^{\epsilon}(s),s\epsilon^{-1})-G(a^{\epsilon}(b_{j-1}),s\epsilon^{-1}) )\text{d}s\right |
 +\left|\int_{b_{j-1}}^{b_j} G(a^{\epsilon}(b_{j-1}),s\epsilon^{-1}) \text{d}s \right|
 =:I_j^1+I_j^2.
\end{align*}
Consider the term $I_j^1$. Since $|s-b_{j-1}|\leq L$, then by (\ref{ae}), we have
\begin{equation*}
  |a^{\epsilon}(s)-a^{\epsilon}(b_{j-1})|\leq L\mathcal{X}(2R).
\end{equation*}
As for any $t$ both vector-fields $y^t$ and  $\langle\langle P\rangle\rangle$  belong to $\text{Lip}_{\chi}(\C^n, \C^n)$, then
\begin{equation*}
  I_j^1\leq 2L\mathcal{X}(2R) \cdot L\mathcal{X}(2R) =2L^2(\mathcal{X}(2R))^{2}.
\end{equation*}
Now consider the term $I_j^2$. We have
\begin{align*}
  {I_j^2}&=\left|\int_{b_{j-1}}^{b_j}y^{s\epsilon^{-1}}(a^{\epsilon}(b_{j-1}))\text{d}s-\int_{b_{j-1}}^{b_j}\langle\langle P\rangle\rangle(a^{\epsilon}(b_{j-1}))\text{d}s\right|\\
  &=\left|\int_{b_{j-1}}^{b_{j-1}+L}\Phi_{\Lambda\epsilon^{-1}\tau}\circ P(\Phi_{-\Lambda\epsilon^{-1}\tau}a^{\epsilon}(b_{j-1}))\text{d}\tau-L\langle\langle P\rangle\rangle(a^{\epsilon}(b_{j-1}))\right|\\
  &=\left|\int_0^L \Phi_{\Lambda\epsilon^{-1}b_{j-1}} \Phi_{\Lambda\epsilon^{-1}\tilde{\tau}} \circ P(\Phi_{-\Lambda\epsilon^{-1}\tilde{\tau}}\Phi_{-\Lambda\epsilon^{-1}b_{j-1}}a^{\epsilon}(b_{j-1}))\text{d}\tilde{\tau} -L\langle\langle P\rangle\rangle(a^{\epsilon}(b_{j-1}))\right|\\
  &=\left|\Phi_{\Lambda\epsilon^{-1}b_{j-1}}\int_0^L\Phi_{\Lambda\epsilon^{-1}\tilde{\tau}} \circ P(\Phi_{-\Lambda\epsilon^{-1}\tilde{\tau}}z)\text{d}\tilde{\tau} -L\langle\langle P\rangle\rangle(a^{\epsilon}(b_{j-1}))\right|,
\end{align*}
where $z:=\Phi_{-\Lambda\epsilon^{-1}b_{j-1}}a^{\epsilon}(b_{j-1})\in \bar B_{2R}$.
Making in the last inequality the substitution $\epsilon^{-1}\tilde{\tau}=t$ and noting that $d\tilde\tau =\epsilon dt= L^2\,dt$, we obtain:
\begin{align*}
I_j^2 &=\left|\Phi_{\Lambda\epsilon^{-1}b_{j-1}} \frac{L}{L^{-1}}\int_0^{L^{-1}}\Phi_{\Lambda t}\circ P(\Phi_{-\Lambda t}z)\text{d}t
  -L\langle\langle P\rangle\rangle(a^{\epsilon}(b_{j-1}))\right|\\
  &=\left|L\Phi_{\Lambda\epsilon^{-1}b_{j-1}}\langle\langle P\rangle\rangle^{L^{-1}}(z)-L\langle\langle P\rangle\rangle(a^{\epsilon}(b_{j-1}))\right|.
\end{align*}
From \eqref{fromlemma}, $\langle\langle P\rangle\rangle^{L^{-1}}(z)=\langle\langle P\rangle\rangle(z)+o(1)$ as $\epsilon\rightarrow 0$. Since by item 3) of Proposition \ref{p}
\[
\langle\langle  P\rangle\rangle(z)= \langle\langle  P\rangle\rangle(\Phi_{-\Lambda \epsilon^{-1}b_{j-1}}a^{\epsilon}(b_{j-1}))= \Phi_{-\Lambda \epsilon^{-1}b_{j-1}}\langle\langle  P\rangle\rangle(a^{\epsilon}(b_{j-1})),
\]
and as by Lemma \ref{a4} the $o(1)$ above does not depend on $ z\in \bar B_{2R}$,  then
\begin{equation*}
   I_j^2=\left|L\langle\langle P\rangle\rangle(a^{\epsilon}(b_{j-1}))+L\cdot o(1)-L\langle\langle P\rangle\rangle(a^{\epsilon}(b_{j-1}))\right| =L\cdot o(1).
\end{equation*}
So
\begin{equation*}
  |I_j|\leq L\cdot o(1)+2L^2(\mathcal{X}(2R))^{2}=L\cdot o(1),
\end{equation*}
and therefore,
\begin{equation*}
  |I|\leq \sum_{j=1}^{N}|I_j|\leq NL\cdot o(1)\leq \theta\cdot o(1)=:\kappa(\epsilon).
\end{equation*}
The lemma is proved.
\end{proof}

\begin{proof}[Proof of Lemma \ref{solutiona0}]
Consider
\begin{align}
\nonumber  A(\tau):=&\ a^0(\tau)-v_0-\int_0^{\tau}\langle\langle P\rangle\rangle(a^0(s))\text{d}s\\
\label{term1}  =&\ a^0(\tau)-a^{\epsilon_j}(\tau)\\
 \label{term2}  &\ \ +a^{\epsilon_j}(\tau)-v_0-\int_0^{\tau}y(a^{\epsilon_j}(\tau),s\epsilon_j^{-1})\text{d}s\\
 \label{term3}  &\ \  +\int_0^{\tau}y(a^{\epsilon_j}(\tau),s\epsilon_j^{-1})\text{d}s-\int_0^{\tau}\langle\langle P\rangle\rangle(a^{\epsilon_j}(s))\text{d}s\\
\label{term4}   &\ \  +\int_0^{\tau}\langle\langle P\rangle\rangle(a^{\epsilon_j}(s))\text{d}s-\int_0^{\tau}\langle\langle P\rangle\rangle(a^0(s))\text{d}s.
\end{align}
The term (\ref{term1})$\,\rightarrow 0$ as $\epsilon_j\rightarrow 0$ in view of (\ref{compact}), the term (\ref{term2})=0 by  (\ref{aet}), the term (\ref{term3})$\leq\kappa(\epsilon)$ by Lemma \ref{h} and (\ref{term4})$\leq\tau\mathcal{X}(2R)|a^{\epsilon_j}-a^0|\rightarrow 0$ as $\epsilon_j\rightarrow 0$ by (\ref{compact}).
Passing to the limit as $\epsilon_j\rightarrow 0$, we see that $A(\tau)\equiv 0$. Therefore, $a^0(\tau)$ is a solution of  (\ref{effective}) for $0\leq {\color{red}|\tau|}\leq \theta$.
Lemma~\ref{solutiona0} is proved.
\end{proof}

Since a solution of eq. (\ref{aet}) is unique, then the convergence (\ref{compact}) holds as $\epsilon\rightarrow 0$, not only as $\epsilon_j\rightarrow 0$. Thus we have proved

\begin{theorem}\label{limit}
Let $a^{\epsilon}(\tau),|\tau|\leq \theta$, be a solution of (\ref{aet}). Then
\begin{equation*}
  a^{\epsilon}(\tau)\rightarrow a^0(\tau),\ \ \ \text{ uniformly  for}\  |\tau|\leq \theta,
\end{equation*}
where $a^0(\tau)$ is a solution of (\ref{effective}).
\end{theorem}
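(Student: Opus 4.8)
The plan is to upgrade the subsequential convergence already extracted in \eqref{compact} to convergence of the whole family $\{a^{\epsilon}\}_{0<\epsilon\le1}$ as $\epsilon\to0$, by a standard compactness-plus-uniqueness argument. In fact all the analytic substance has been absorbed into the preceding lemmas, so what remains is only a short topological wrap-up.

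First I would recall that, by the estimates \eqref{ae} and the Arzela--Ascoli theorem, the family $\{a^{\epsilon}\}$ is precompact in $C([-\theta,\theta],\mathbb{C}^{n})$; hence every sequence $\epsilon_j\to0$ admits a subsequence along which $a^{\epsilon_j}\to a^0$ uniformly on $[-\theta,\theta]$ for some limit curve $a^0$. Second, by Lemma \ref{solutiona0} every such limit $a^0$ is a solution of the effective equation \eqref{effective} with $a^0(0)=v_0$. Third, since $\langle\langle P\rangle\rangle\in\text{Lip}_{\mathcal{X}}(\mathbb{C}^{n},\mathbb{C}^{n})$ by Lemma \ref{a4}, the initial value problem \eqref{effective} has a unique solution on $|\tau|\le\theta$; consequently all subsequential limits coincide with this single curve, which I denote $a^0(\tau)$.

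Finally I would invoke the elementary fact that a family valued in a compact metric space, all of whose convergent subsequences share the same limit, must itself converge to that limit. Concretely, arguing by contradiction: if $a^{\epsilon}\not\to a^0$ in $C([-\theta,\theta],\mathbb{C}^{n})$, there would exist $\delta>0$ and a sequence $\epsilon_j\to0$ with $\|a^{\epsilon_j}-a^0\|_{C}\ge\delta$; precompactness would then force a further subsequence to converge uniformly to some solution of \eqref{effective}, which by uniqueness equals $a^0$, contradicting $\|a^{\epsilon_j}-a^0\|_{C}\ge\delta$. This yields $a^{\epsilon}\to a^0$ uniformly for $|\tau|\le\theta$, as claimed.

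I do not expect any genuine obstacle at this stage: the real difficulty -- showing that the fast oscillations average out so that every limit curve solves the effective equation -- was already overcome in Lemma \ref{h} and Lemma \ref{solutiona0} through the two-scale splitting into the subintervals $\triangle_j$ of length $L=\sqrt{\epsilon}$. The only points demanding minor care are that the case $\tau<0$ is treated symmetrically to the case $\tau\ge0$ for which the constructions were written, and that the $o(1)$ and $\kappa(\epsilon)$ bounds produced there are uniform in $v_0\in\bar{B}_{R}$, so that the resulting convergence is genuinely uniform on the full interval $[-\theta,\theta]$.
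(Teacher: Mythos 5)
Your proposal is correct and follows essentially the same route as the paper: the paper's (one-sentence) proof is exactly this compactness-plus-uniqueness wrap-up, relying on precompactness from \eqref{ae}, Lemma \ref{solutiona0} to identify every subsequential limit as a solution of \eqref{effective}, and the Lipschitz property of $\langle\langle P\rangle\rangle$ (Lemma \ref{a4}) for uniqueness of that solution. Your explicit contradiction argument and your remarks on negative $\tau$ and uniformity in $v_0$ merely spell out details the paper leaves implicit.
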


Theorem~\ref{limit} proves the second assertion of Theorem \ref{tbk}, whose first assertion was already established in Lemma~\ref{a4}.

Since $|v_j^{\epsilon}(\tau)|\equiv |a_j^{\epsilon}(\tau)|$, then Theorem \ref{limit} implies:

\begin{corollary}
The solution $v^\epsilon(t)$ of \eqref{2} satisfies
\begin{equation*}
\sup_{ |t|\leq \epsilon^{-1}\theta}
\big|  |v_j^{\epsilon}(t)| - |a_j^{0}(\epsilon t)|\big| \to 0\quad\text{as}\;\; \epsilon\to0, \ \forall j.\ 
\end{equation*}
\end{corollary}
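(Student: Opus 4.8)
The plan is to reduce the statement, by a change of the time variable, directly to the uniform convergence $a^\epsilon\to a^0$ furnished by Theorem~\ref{limit}. The only substantive ingredient beyond that theorem is the amplitude-preservation identity \eqref{18}; everything else is bookkeeping between the fast time $t$ and the slow time $\tau=\epsilon t$.

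First I would unwind the definitions to record the pointwise identity
$$
|v_j^{\epsilon}(t)| = |a_j^{\epsilon}(\epsilon t)|,\qquad |t|\le \epsilon^{-1}\theta,\ \ 1\le j\le n.
$$
Indeed, the solution $v^{\epsilon}(t)$ of \eqref{2} is, after the passage to the slow time $\tau=\epsilon t$, precisely the solution $v(\tau)$ of \eqref{10}; these are the same trajectory, merely reparametrised. The interaction-representation change of variable \eqref{11} multiplies each component $v_j(\tau)$ by the unit-modulus factor $e^{\mathbf{i}\epsilon^{-1}\lambda_j\tau}$ to produce $a_j^{\epsilon}(\tau)$, whence \eqref{18} gives $|a_j^{\epsilon}(\tau)|=|v_j(\tau)|$. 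Evaluating at $\tau=\epsilon t$ yields the claimed identity.

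Next I would substitute this identity into the quantity to be estimated and change the variable in the supremum. Since the map $t\mapsto \tau=\epsilon t$ carries $\{|t|\le\epsilon^{-1}\theta\}$ bijectively onto $\{|\tau|\le\theta\}$, we obtain
$$
\sup_{|t|\le\epsilon^{-1}\theta}\big|\,|v_j^{\epsilon}(t)|-|a_j^{0}(\epsilon t)|\,\big|
=\sup_{|\tau|\le\theta}\big|\,|a_j^{\epsilon}(\tau)|-|a_j^{0}(\tau)|\,\big|.
$$
By the reverse triangle inequality $\big|\,|z|-|w|\,\big|\le|z-w|$ in $\C$, the right-hand side is bounded by $\sup_{|\tau|\le\theta}|a_j^{\epsilon}(\tau)-a_j^{0}(\tau)|$, which is at most $\|a^{\epsilon}-a^{0}\|_{C([-\theta,\theta],\C^n)}$. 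Theorem~\ref{limit} asserts exactly that this last quantity tends to $0$ as $\epsilon\to0$, and this holds simultaneously for every coordinate $j$, since the norm on $\C^n$ dominates each modulus $|a_j|$. This proves the corollary.

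I do not expect a genuine obstacle here: the statement is a direct corollary of Theorem~\ref{limit}. The only point requiring care is the consistent translation between the fast-time solution of \eqref{2} and the slow-time objects $a^{\epsilon},a^{0}$, and in particular the observation that the amplitudes are invariant under the unitary interaction-representation transformation, which is the content of \eqref{18}.
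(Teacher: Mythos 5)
Your proof is correct and follows exactly the paper's route: the paper dispatches this corollary in one line, noting that $|v_j^{\epsilon}(\tau)|\equiv|a_j^{\epsilon}(\tau)|$ (the amplitude identity \eqref{18}) and invoking the uniform convergence of Theorem~\ref{limit}. Your write-up merely makes explicit the time reparametrisation $\tau=\epsilon t$ and the reverse triangle inequality, which the paper leaves implicit.
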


\begin{example}\label{ex3}
By Examples \ref{ex1} and \ref{ex2},\ the effective equation for the system \eqref{ss1}, written in the complex variable $v$ and the
 slow time $\tau$, is
$$
\frac{\partial}{\partial\tau} v = v^2 \bar v.
$$
In the real variables and the fast time the equation reads
$$
\dot x= \eps x(x^2+y^2), \quad \dot y =\eps y(x^2+y^2).
$$
For $x_0=1, y_0=0$ its solution is $x(t) = (1-2\eps t)^{-1/2}$, $y(t) =0$.  So for $|t|\le \tfrac14 \eps^{-1}$ the solution of
\eqref{ss1} with initial data $(1,0)$ satisfies
$$
\big| x^2(t) +y^2(t)\big|^{1/2} =  (1-2\eps t)^{-1/2}  + o(1) \qquad \text{as $\eps\to0$. }
$$
\end{example}
\medskip

\section{ Hamiltonian equations}\label{s_ham}

Let us provide the space $\mathbb{R}^{2n}\sim \mathbb{C}^n$ with the usual symplectic structure, given
by the form $\omega_2=\sum \text{d}x_j\wedge \text{d}y_j$. Then a real-valued Hamiltonian
\begin{equation*}
H= h_2(z,\bar{z})+\epsilon h(z,\bar{z}),\ \ h_2=-\frac{1}{2}\sum\lambda_j|z_j|^2,   \;\; h\in C^1(\C^n),
\end{equation*}
gives rise to the Hamiltonian system
$$
  \dot{z}_j=-\mathbf{i}\lambda_jz_j+2\mathbf{i}\epsilon\frac{\partial h}{\partial\bar{z}_j},\qquad  1\leq j\leq n,
$$
which we rewrite as
\begin{eqnarray}
\label{hamil3'} \frac{\partial z}{\partial \tau}+\mathbf{i}\epsilon^{-1}\text{diag}(\lambda_j)z=2\mathbf{i}\frac{\partial h}{\partial\bar{z}}:=P(z).
\end{eqnarray}
Assume that $P$ is locally Lipschitz. It means that
$$
h\in C^{1,1}_{ \text{loc}} =\{ u(z) \in
C^1(\mathbb{C}^n): u_z, u_{\bar z} \;\;\text{are locally Lipschitz functions}\}.
$$
Then \eqref{hamil3'}  is a special case of equation (\ref{maineq**}).

\begin{lemma}\label{yjt}
Let $y^t(a)$ be defined as in (\ref{20}). Then
\begin{equation*}
  y_j^t(a)=2\mathbf{i}\frac{\partial }{\partial \bar{a}_j}h(\Phi_{-\Lambda t}a).
\end{equation*}
\end{lemma}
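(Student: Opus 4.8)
The plan is to unwind the definitions and apply the Wirtinger chain rule; the identity is then a one-line computation. First I would use (\ref{20}) and (\ref{hamil3'}) to write the $j$-th component of $y^t$ as
$$
y_j^t(a) = e^{\mathbf{i}\lambda_j t}\, P_j(\Phi_{-\Lambda t}a) = 2\mathbf{i}\, e^{\mathbf{i}\lambda_j t}\,\frac{\partial h}{\partial \bar z_j}(\Phi_{-\Lambda t}a),
$$
using that $\Phi_{\Lambda t}$ multiplies the $j$-th coordinate by $e^{\mathbf{i}\lambda_j t}$. It then suffices to check that differentiating $h\circ\Phi_{-\Lambda t}$ in $\bar a_j$ reproduces exactly this factor $e^{\mathbf{i}\lambda_j t}$ in front of $\partial h/\partial\bar z_j$.

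Next I would set $z=\Phi_{-\Lambda t}a$, so that $z_k=e^{-\mathbf{i}\lambda_k t}a_k$ and $\bar z_k=e^{\mathbf{i}\lambda_k t}\bar a_k$ for each $k$. Since the map $a\mapsto z$ is complex-linear, hence holomorphic, one has $\partial z_k/\partial\bar a_j=0$ and $\partial\bar z_k/\partial\bar a_j=e^{\mathbf{i}\lambda_j t}\delta_{kj}$. The chain rule for the $\partial/\partial\bar a_j$ derivative then gives
$$
\frac{\partial}{\partial\bar a_j}\,h(\Phi_{-\Lambda t}a)
=\sum_k\frac{\partial h}{\partial z_k}(z)\,\frac{\partial z_k}{\partial\bar a_j}
+\sum_k\frac{\partial h}{\partial\bar z_k}(z)\,\frac{\partial\bar z_k}{\partial\bar a_j}
=e^{\mathbf{i}\lambda_j t}\,\frac{\partial h}{\partial\bar z_j}(\Phi_{-\Lambda t}a).
$$
Multiplying this by $2\mathbf{i}$ matches the expression for $y_j^t(a)$ obtained above, which proves the lemma.

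There is essentially no serious obstacle here; the only point demanding care is the bookkeeping of holomorphic versus anti-holomorphic dependence in the Wirtinger chain rule. Because $\Phi_{-\Lambda t}$ is complex-linear, each $z_k$ depends holomorphically on $a$, so all the $\partial/\partial z_k$ terms drop out and only the diagonal anti-holomorphic contribution $k=j$ survives, delivering precisely the rotation factor $e^{\mathbf{i}\lambda_j t}$ that turns $\partial h/\partial\bar z_j$ into $\partial/\partial\bar a_j\bigl(h\circ\Phi_{-\Lambda t}\bigr)$. Since $h\in C^1$, all these derivatives exist and are continuous, so no further regularity discussion is needed.
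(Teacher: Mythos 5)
Your proof is correct and follows essentially the same route as the paper: both identify $P_j=2\mathbf{i}\,\partial h/\partial\bar z_j$ from \eqref{hamil3'} and apply the Wirtinger chain rule to $h\circ\Phi_{-\Lambda t}$, using that $\bar z_j=e^{\mathbf{i}\lambda_j t}\bar a_j$ produces the rotation factor $e^{\mathbf{i}\lambda_j t}$. If anything, your version is slightly more complete than the paper's, since you justify explicitly why the holomorphic terms $\partial h/\partial z_k\cdot\partial z_k/\partial\bar a_j$ and the off-diagonal terms vanish, a step the paper's one-line chain rule leaves implicit.
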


\begin{proof}
Denote $\Phi_{-\Lambda t}a=v$, i.e.,  $v_j=e^{-\mathbf{i}\lambda_j t}a_j$. Then
\begin{align*}
  2\mathbf{i}\frac{\partial h(v(a))}{\partial \bar{a}_j} &=2\mathbf{i}\frac{\partial h(v(a))}{\partial \bar{v}_j}\cdot\frac{\partial \bar{v}_j}{\partial \bar{a}_j } =2\mathbf{i}\frac{\partial h(v(a))}{\partial \bar{v}_j} e^{\mathbf{i}\lambda_j t}\\
  &=2\mathbf{i}e^{\mathbf{i}\lambda_j t} \frac{\partial h(\Phi_{-\Lambda t}a)}{\partial \bar{v}_j}
=e^{\mathbf{i}\lambda_j t} P_j(\Phi_{-\Lambda t}a)= y_j^t(a),
\end{align*}
since $2\mathbf{i}\frac{\partial h}{\partial\bar{v}_j}=P_j(v)$.
\end{proof}

From Lemma \ref{yjt},
\begin{equation}\label{relation}
  \langle\langle P\rangle\rangle_j^{T}(a)=\frac{2\mathbf{i}}{|T|}\int_0^T\frac{\partial }{\partial \bar{a}_j}h(\Phi_{-\Lambda t}a)\text{d}t
  =2\mathbf{i}\frac{\partial }{\partial \bar{a}_j}\langle h\rangle^T(a),
\end{equation}
where we denoted
$\
\langle h\rangle^T(a)=\frac{1}{|T|}\int_0^T h(\Phi_{-\Lambda t}a)\mathrm{d} t.
$

For the same reason as in Section \ref{18*} (see there Lemma~\ref{a4}), since $h$ is a  locally Lipschitz function, then
the limit
\begin{equation}\label{limm}
\lim_{T\to \pm\infty} \langle h\rangle^T(a) =: \langle h\rangle(a)
\end{equation}
exists and is  locally Lipschitz (this limit  is the averaging of the function $h$ in the direction $\Lambda$).
 Repeating the proof of
Proposition~\ref{p}.3) we get that $ \langle h\rangle(a)$ is invariant with respect to the rotations $\Phi_{\Lambda \theta}$:
\begin{eqnarray}\label{invariance}
\langle h\rangle(\Phi_{\Lambda \theta}a) = \langle h\rangle(a )\qquad \forall \theta,\; \forall a.
\end{eqnarray}
\newpage

We claim that $\langle h\rangle$ is a $C^1$-function and 
\begin{equation}\label{ham_eff}
 2 \mathbf{i}  \frac{\partial}{ \partial \bar a_j} \langle h\rangle (a) = \langle \langle P\rangle\rangle_j (a) ,
\end{equation}
for every $j$.  Indeed, by  the second assertion of Lemma~\ref{b1}  and \eqref{relation}
$$
  \langle\langle \bar P\rangle\rangle_j^{T}(a)  =-2 \mathbf{i} \frac{\partial }{\partial {a}_j}\langle h\rangle^T(a).
$$
From here and \eqref{relation} we conclude that $\langle h\rangle^T$ is a $C^1$ function, and for each $j$ the derivative 
 $(\partial/\partial x_j) \langle h\rangle^T$ is a complex linear combination of $  \langle\langle P\rangle\rangle_j^{T}$
 and $  \langle\langle \bar P\rangle\rangle_j^{T}$. So by Lemma~\ref{a3}, for any bounded domain $B$ in $\C^n$ 
 functions $(\partial/\partial x_j) \langle h\rangle^T$ are uniformly in $T>0$ bounded and Lipschitz on $B$. Same is true for
 $(\partial/\partial y_j) \langle h\rangle^T$, for all $j$. Now the uniform in $B$ convergence 
  of $ \langle\langle P\rangle\rangle_j^{T}$ to $ \langle\langle P\rangle\rangle_j$ implies the uniform convergence of
  $(\partial/\partial x_j) \langle h\rangle^T$ and $(\partial/\partial y_j) \langle h\rangle^T$ to limits, for all $j$. In view
  of the uniform in $B$ convergence of   $ \langle h \rangle^T$ to $ \langle h\rangle$ we get  that  
 $\langle h\rangle$ is  $C^1$-smooth  and satisfies \eqref{ham_eff}.

We have proved

\begin{theorem}\label{t_ham}
If  equation \eqref{2} has the hamiltonian form \eqref{hamil3'}, where the real Hamiltonian $h$ belongs to the space $C^{1,1}_{ \text{loc}} $,
then the effective equation \eqref{effective} also is hamiltonian: the vector field $\langle \langle P\rangle\rangle$ has the hamiltonian form 
 \eqref{ham_eff}, where the Hamiltonian $ \langle h\rangle \in  C^{1,1}_{ \text{loc}}$ is defined in \eqref{limm}. 
\end{theorem}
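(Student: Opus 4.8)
The plan is to show that for each finite $T$ the partial averaging $\langle\langle P\rangle\rangle^T$ is already Hamiltonian, with Hamiltonian $\langle h\rangle^T$, and then to pass to the limit $T\to\pm\infty$ while preserving the Hamiltonian structure. The first step is essentially Lemma~\ref{yjt} together with \eqref{relation}, which exhibit the identity $\langle\langle P\rangle\rangle^T_j = 2\mathbf{i}\,\frac{\partial}{\partial\bar{a}_j}\langle h\rangle^T$. Thus the real content of the theorem is to check that this structure survives the limit: that $\langle h\rangle=\lim_T\langle h\rangle^T$ is not merely locally Lipschitz (which we already know from \eqref{limm}) but in fact lies in $C^{1,1}_{\text{loc}}$, and that $2\mathbf{i}\,\frac{\partial}{\partial\bar{a}_j}\langle h\rangle = \langle\langle P\rangle\rangle_j$, i.e. \eqref{ham_eff} holds.

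The second step controls the first derivatives of $\langle h\rangle^T$ \emph{uniformly} in $T$. Here I would use the real-valued identity $\overline{\partial F/\partial z_j}=\partial F/\partial\bar{z}_j$ from Lemma~\ref{b1} to obtain $\langle\langle \bar P\rangle\rangle^T_j = -2\mathbf{i}\,\frac{\partial}{\partial a_j}\langle h\rangle^T$, so that both $\frac{\partial}{\partial x_j}\langle h\rangle^T$ and $\frac{\partial}{\partial y_j}\langle h\rangle^T$ are fixed complex linear combinations of $\langle\langle P\rangle\rangle^T_j$ and $\langle\langle\bar P\rangle\rangle^T_j$. Applying Lemma~\ref{a3} to both $P$ and $\bar P$ (both are locally Lipschitz vector-fields) then shows that, on any bounded ball, all first-order partial derivatives of $\langle h\rangle^T$ are bounded and Lipschitz with constants independent of $T$.

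The third step passes to the limit. By Lemma~\ref{a4}, $\langle\langle P\rangle\rangle^T_j\to\langle\langle P\rangle\rangle_j$ and $\langle\langle\bar P\rangle\rangle^T_j\to\langle\langle\bar P\rangle\rangle_j$ uniformly on bounded balls, hence the first derivatives of $\langle h\rangle^T$ converge uniformly there. Combined with the uniform convergence $\langle h\rangle^T\to\langle h\rangle$ from \eqref{limm}, the standard theorem on differentiation of uniformly convergent sequences gives that $\langle h\rangle$ is $C^1$ with $\nabla\langle h\rangle=\lim_T\nabla\langle h\rangle^T$; the uniform-in-$T$ Lipschitz bounds from the previous step pass to the limit, yielding $\langle h\rangle\in C^{1,1}_{\text{loc}}$. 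Letting $T\to\pm\infty$ in \eqref{relation} finally produces $2\mathbf{i}\,\frac{\partial}{\partial\bar{a}_j}\langle h\rangle=\langle\langle P\rangle\rangle_j$, which is \eqref{ham_eff}, so the effective vector-field $\langle\langle P\rangle\rangle$ is Hamiltonian with Hamiltonian $\langle h\rangle$.

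I expect the main obstacle to lie in the third step, specifically in promoting $\langle h\rangle$ from merely Lipschitz to genuinely $C^1$. Knowing only that $\langle h\rangle$ is locally Lipschitz (from \eqref{limm}) and that each $\langle h\rangle^T$ is $C^1$ is not enough to differentiate under the limit; the decisive point is the uniform-in-$T$ control of $\nabla\langle h\rangle^T$ supplied by Lemma~\ref{a3}, which upgrades the pointwise convergence of the $\langle h\rangle^T$ to convergence in $C^1$ and thereby legitimizes interchanging the limit $T\to\pm\infty$ with differentiation.
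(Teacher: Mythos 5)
Your proposal is correct and follows essentially the same route as the paper's own proof: both establish the finite-$T$ identity $\langle\langle P\rangle\rangle^T_j = 2\mathbf{i}\,\partial_{\bar a_j}\langle h\rangle^T$ via Lemma~\ref{yjt} and \eqref{relation}, use Lemma~\ref{b1} together with Lemma~\ref{a3} to get uniform-in-$T$ bounds on all first-order derivatives of $\langle h\rangle^T$, and then combine the uniform convergence supplied by Lemma~\ref{a4} with the classical theorem on differentiating uniformly convergent sequences to conclude that $\langle h\rangle$ is $C^{1,1}_{\text{loc}}$ and satisfies \eqref{ham_eff}. The obstacle you single out --- upgrading $\langle h\rangle$ from merely Lipschitz to genuinely $C^1$ --- is precisely the point the paper's argument is designed to handle, and it handles it in the same way.
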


\begin{example}
If  $h$ is a real polynomial,
\begin{equation}\label{real_polyn}
  h(a)=\sum_{\substack{\alpha,\beta\in\mathbb{Z}_+^n}} m_{\alpha\beta}a^{\alpha}\bar{a}^{\beta}, \quad m_{\alpha\beta}=\overline{m_{\alpha\beta}},
\end{equation}
then
$
 \langle h\rangle(a) = \sum_{\Lambda\cdot\alpha=\Lambda\cdot\beta}m_{\alpha\beta}a^{\alpha}\bar{a}^{\beta}.
$
\end{example}

Let us take any $h \in  C^{1,1}_{ \text{loc}} $ and
assume that the vector $\Lambda$ is non-resonant, that is $\Lambda\cdot s=0$ for some $s\in \Z^n$ implies that $s=0$. Let us
 introduce in $\C^n$ the
action-angle coordinates
$
(I,\vp)$ with $ I\in \R_+^n,\; \vp\in \T^n,
$
where for a vector
$
z=(z_1, \dots,z_n)$ with $z_j=  r_je^{ \mathbf{i}\vp_j}
$
we have
$
I=(I_1, \dots, I_n), I_j= \tfrac12 r_j^2,
$
and $\vp=(\vp_1, \dots, \vp_n)$. Then $\omega_2= \text{d}I\wedge \text{d}\vp$, and
$$
\Phi_{\Lambda t} (I, \vp) = (I, \vp+\Lambda t).
$$
Since now the curve $t\mapsto \vp+\Lambda t$ is dense in $\T^n$ for each $\vp$, then \eqref{invariance} implies that
$ \langle h\rangle$ does not depend on the angles $\vp$.\footnote{For example, if $h$ is the polynomial \eqref{real_polyn}, then
$
 \langle h\rangle(a) = \sum m_{\alpha\alpha}a^{\alpha}\bar{a}^{\alpha}.
$
}
That is, the Hamiltonian $ \langle h\rangle$ is integrable, and
the effective equation reads
$$
 \dot I=0,\quad \dot\vp= \nabla_I  \langle h\rangle(I).
$$
So its solutions $a(\tau)$ are such that $|a_j(\tau)|^2=\,$const, and Theorem \ref{limit} implies

\begin{corollary}\label{nonresonant}
If the frequency vector
$\Lambda$ is non-resonant and $h\in C^{1,1}_{loc} $,  then a solution $z(t)$  of  equation \eqref{hamil3'} satisfies
\begin{equation*}
  \sup_{| t| \leq \epsilon^{-1}\theta}\left||z_j(t)|^2-|z_{j0}|^2\right
  |=o(1)\ \ \text{as } \epsilon\rightarrow 0,
\end{equation*}
for a suitable $\theta$ which depends on $|z_0|$.
\end{corollary}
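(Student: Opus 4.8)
The plan is to combine the uniform convergence furnished by Theorem~\ref{limit} with the integrability of the averaged Hamiltonian $\langle h\rangle$ established in the discussion preceding the corollary. Since $h\in C^{1,1}_{\mathrm{loc}}$, the field $P=2\mathbf{i}\,\partial h/\partial\bar z$ is locally Lipschitz, so \eqref{hamil3'} is an instance of \eqref{maineq**} and Theorem~\ref{limit} is applicable. Writing the solution in the slow time $\tau=\epsilon t$ and passing to the interaction representation $a^\epsilon(\tau)=\Phi_{\tau\epsilon^{-1}\Lambda}z(\tau)$, we retain the amplitude identity $|a^\epsilon_j(\tau)|=|z_j(\tau)|$ for each $j$, valid for $|\tau|\le\theta$ with $\theta=R/\mathcal{X}(2R)$ and $R=|z_0|$.

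First I would record the consequence of non-resonance for the effective flow. By Theorem~\ref{t_ham} the effective equation \eqref{effective} is Hamiltonian with Hamiltonian $\langle h\rangle$, and, as shown just above, the non-resonance of $\Lambda$ forces $\langle h\rangle$ to be independent of the angles $\vp$ in the action-angle coordinates $(I,\vp)$, $I_j=\tfrac12|z_j|^2$. Hence along any solution $a^0(\tau)$ of \eqref{effective} one has $\dot I_j=-\partial\langle h\rangle/\partial\vp_j=0$, so each action is conserved:
$$
|a^0_j(\tau)|^2=2I_j=|a^0_j(0)|^2=|z_{j0}|^2,\qquad\forall\,\tau,\ \forall\,j.
$$

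Next I would invoke Theorem~\ref{limit}, which gives $\sup_{|\tau|\le\theta}|a^\epsilon(\tau)-a^0(\tau)|\to0$ as $\epsilon\to0$, where $a^0$ is the unique solution of \eqref{effective} with $a^0(0)=z_0$. Using the amplitude identity and the conservation above,
$$
\big||z_j(\tau)|^2-|z_{j0}|^2\big|=\big||a^\epsilon_j(\tau)|^2-|a^0_j(\tau)|^2\big|\le\big(|a^\epsilon_j(\tau)|+|a^0_j(\tau)|\big)\,\big|a^\epsilon_j(\tau)-a^0_j(\tau)\big|.
$$
Since both curves stay in $\bar B_{2R}$, the first factor is at most $4R$, so the right-hand side is bounded by $4R\sup_{|\tau|\le\theta}|a^\epsilon-a^0|\to0$, uniformly in $\tau$. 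Returning to the fast time via $t=\tau/\epsilon$, the range $|\tau|\le\theta$ becomes $|t|\le\epsilon^{-1}\theta$, and the claimed estimate follows.

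The substantive input is the integrability of $\langle h\rangle$, already supplied by the density of each orbit $\{\vp+\Lambda t\}$ in $\T^n$ together with the rotation-invariance \eqref{invariance}; granting that, the rest is elementary bookkeeping that converts the convergence of $a^\epsilon$ into convergence of the squared amplitudes. The only point requiring care is the uniform bound $|a^\epsilon_j|+|a^0_j|\le4R$, which is what lets one linearize $|\cdot|^2$ and turn first-order closeness of the interaction-representation curves into closeness of the conserved quantities.
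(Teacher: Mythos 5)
Your proposal is correct and follows essentially the same route as the paper: non-resonance plus the rotation-invariance \eqref{invariance} makes $\langle h\rangle$ angle-independent, so the actions $I_j=\tfrac12|a_j|^2$ are conserved along the effective flow, and Theorem~\ref{limit} together with the identity $|a^{\epsilon}_j(\tau)|=|z_j(\tau)|$ converts this into the claimed uniform estimate on $|t|\le\epsilon^{-1}\theta$. The only difference is that you spell out the elementary step (the $4R$ bound turning closeness of curves into closeness of squared amplitudes) that the paper leaves implicit.
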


If  $h\in C^{1,1}_{ \text{loc}} $, then $2\mathbf{i} \partial_{\bar z} h \in  \text{Lip}_{\mathcal{X}}(\mathbb{C}^{n},\mathbb{C}^{n})$
for a  function $\mathcal{X}$ as in Definition~\ref{8}. For $ m\in \N$ we  write
$$
\hh(z) = O(z^m),
$$
if
$
\epsilon^{-m} \hh (\epsilon z) \in \text{Lip}_{\mathcal{X}}(\mathbb{C}^{n},\mathbb{C}^{n})$ for all $ 0<\epsilon\le1,
$
for a suitable function $\mathcal{X}$, independent from $\epsilon$. 
 For example, if $h$ is a polynomial \eqref{real_polyn} such that the coefficients
$m_{\alpha \beta}$ are nonzero only for $|\alpha|+|\beta| \ge M$, then $\hh =O(z^{M-1})$. Consider the equation
\begin{equation}\label{small_ham}
 \dot z  +\mathbf{i}\, \text{diag}(\lambda_j)z=\hh,
\end{equation}
where $\hh \in O(z^m)$. To study its small solutions we substitute $ z=\epsilon w$ and get for $w(t)$
 equation \eqref{hamil3'} with $\epsilon := \epsilon^{m-1}$. We see that if the frequency vector $\Lambda$ is non-resonant and $z(t)$
is a solution of \eqref{small_ham} with small initial data $z(0) = \epsilon w_0$, then
$
\big| |z_j (t)|^2 - \epsilon^2 |w_{0j}|^2\big| =  o(\epsilon^2)
$
for $|t|\le \epsilon^{1-m} \theta$, where $\theta=\theta(|w_0|)$. In \cite{Gior} a  more delicate argument is used to show that
if the frequency vector $\Lambda$
 satisfies certain diophantine condition and the Hamiltonian $h$ is analytic, then the stability interval
is much bigger -- it is exponentially long in terms of $\epsilon^{-a}$ for some positive $a$. Our result is significantly weaker,
but it only requires that the vector $\Lambda$ is non-resonant and the Hamiltonian vector-field $\hh$ is Lipschitz--continuous.
We note that the result of  \cite{Gior} generalises to PDEs, e.g. see \cite{Bam}, as well as  Theorem~\ref{tbk}
(and Corollary~\ref{nonresonant}), see \cite{H-G1, HKM}. 

\footnotesize
\bibliographystyle{abbrv} 
\bibliography{averaging}

\end{document}